\documentclass[11pt]{article}
\usepackage[margin=1.1in]{geometry}
\usepackage{amsmath,amssymb,amsthm,mathtools}
\usepackage{enumitem}
\usepackage[colorlinks=true,linkcolor=black,citecolor=black,urlcolor=black,filecolor=black]{hyperref}

\newtheorem{theorem}{Theorem}[section]
\newtheorem{lemma}[theorem]{Lemma}
\newtheorem{proposition}[theorem]{Proposition}
\newtheorem{corollary}[theorem]{Corollary}
\theoremstyle{remark}
\newtheorem{remark}[theorem]{Remark}

\DeclareMathOperator{\rank}{rank}
\DeclareMathOperator{\dist}{dist}

\newcommand{\R}{\mathbb{R}}
\newcommand{\C}{\mathbb{C}}
\newcommand{\F}{\mathcal{F}}
\newcommand{\eps}{\varepsilon}
\newcommand{\ind}[1]{\mathbf{1}_{#1}}
\newcommand{\HS}{\mathrm{HS}}
\newcommand{\logp}{\log_{+}}
\newcommand{\lnp}{\ln_{+}}

\title{Scale-free Hankel factorization and explicit one-dimensional plunge
bounds for time--frequency localization operators}
\author{Ahmadreza Azimifard}
\date{}

\begin{document}
\maketitle
\renewcommand{\thefootnote}{\arabic{footnote}}

\begin{abstract}
The plunge region records the spectral transition of a time--frequency
localization operator. Let $A_0,B_0\subset\R$ be bounded measurable sets of
positive measure with finite boundaries, and let
$S_{cA_0,B_0}=P_{cA_0}Q_{B_0}P_{cA_0}$. We prove that, for every $c>0$ and
$0<\eps<1/2$,
\[
\Lambda_\eps \;\le\; C(A_0,B_0)\,\widetilde L\,
\Bigl(1+\lnp\!\frac{c\,a}{\widetilde L}\Bigr),
\qquad \widetilde L=\ln\frac{1}{\eps(1-\eps)},
\]
where $\lnp x=\max(0,\ln x)$. If $M$ and $K$ count the interval components and
$a$ and $b$ are their maximal lengths, one may take
$C(A_0,B_0)=63MK(1+b)$. The estimate is uniform in $c$ and $\eps$ and becomes
$O(\widetilde L)$ when $\widetilde L\ge ca$.

On the range $c\ge2$ and $\alpha^{-c}<\eps<1/2$ covered by the
parallelepiped theorem of Kulikov and Dam Larsen (2026), this recovers its
$d=1$ order; their complementary very-small-threshold result is sharper. We
give an independent direct proof with explicit geometry dependence and a
single formulation for all $c$ and $\eps$. It acts
on the off-diagonal factor $T=P_{(cA_0)^c}Q_{B_0}P_{cA_0}$: an exact
one-dimensional oscillation factorization reduces each far-field piece to two
modulated copies of the scale-free Hankel kernel $1/(2\pi(s+r))$,
Bernstein-ellipse approximation gives uniform
singular-value decay, and a Taylor-rank estimate controls the boundary layer.
Choosing this width at the spectral depth absorbs finer scales and leaves only
$\lnp(ca/\widetilde L)$ dyadic scales, yielding the self-improving logarithm.
It also identifies tangential phase dependence as an obstruction to this
factorization in nonproduct higher-dimensional geometry.
\end{abstract}

\section{Introduction and statement of results}

\subsection{Setup}
Throughout, the Fourier transform is normalized as
\[
\F f(\xi)=\widehat f(\xi)=\int_\R f(x)e^{-2\pi i x\xi}\,dx ,
\]
a unitary operator on $L^2(\R)$. For a measurable set $E\subset\R$ let
$P_E$ denote multiplication by $\ind{E}$ and
$Q_E=\F^{-1}P_E\F$ the corresponding frequency projection. For bounded
measurable $A_0,B_0\subset\R$ and $c>0$ put $A=cA_0$, $B=B_0$, and
\[
S \;=\; S_{cA_0,B_0}\;=\;P_{A}\,Q_{B}\,P_{A} .
\]
Then $S$ is self-adjoint, $0\le S\le I$, and compact (Lemma~\ref{lem:compact}
below), so its nonzero spectrum is a sequence
$\lambda_1\ge\lambda_2\ge\cdots>0$ of eigenvalues (with multiplicity)
contained in $(0,1]$. The \emph{plunge count} is
\[
\Lambda_\eps \;=\; \Lambda_\eps(cA_0,B_0)
\;=\;\#\{n:\ \eps<\lambda_n(S)<1-\eps\},\qquad 0<\eps<\tfrac12 .
\]
KDL count plunge eigenvalues with the half-open convention
$\#\{n:\ \eps<\lambda_n(S)\le1-\eps\}$; every bound below extends to that
count with the same constant, by applying it at $\eps'<\eps$ and letting
$\eps'\uparrow\eps$ (the bound is continuous in $\eps$).

Kulikov and Dam Larsen \cite{KDL} (henceforth KDL) proved that if one of
$A_0,B_0\subset\R^d$ is a finite disjoint union of parallelepipeds and the
other is bounded with finite upper Minkowski boundary content, then
$\Lambda_\eps\lesssim c^{d-1}LR$ with $L=\log(1/\eps)$,
$R=\log(\alpha c/L)$, on the range $c\ge2$ and
$\alpha^{-c}<\eps<1/2$. They also give a different, sharper description of
the lower-half count when $\eps\le\alpha^{-c}$. For general sets of finite
upper Minkowski boundary content they prove
$\Lambda_\eps\lesssim c^{d-1}LR^2$. They conjectured
(see the discussion following \cite[Thm.~1.6]{KDL}) that the sharp bound $c^{d-1}LR$ holds in
general. In $d=1$ the hypothesis ``finite upper $0$-dimensional Minkowski
boundary content'' is equivalent to $\partial A_0$, $\partial B_0$ being
finite sets (Remark~\ref{rem:mink}), so $A_0$ and $B_0$ are, up to null sets,
finite unions of intervals; a parallelepiped in $d=1$ is an interval, so
both sets satisfy the hypotheses of \cite[Thm.~1.3]{KDL} and the conjecture
in $d=1$ follows from that theorem. The $d=1$ statement is therefore not
open, and we do not claim it as new. What we give is an \emph{independent}
proof in $d=1$: it does not invoke KDL's parallelepiped theorem or their
prolate estimates for $S$, but it does share the overall off-diagonal and
Schatten-quasi-norm architecture that we credit explicitly below. There is
also a closer kernel-level antecedent. KDL's separated operator $J_r$ has an
oscillatory Cauchy kernel, and \cite[\S5.1]{KDL} obtains scale-uniform
singular-value decay by expanding it into rank-two pieces. The present route
reorganizes the one-dimensional analysis directly in boundary-distance
coordinates: it avoids a Whitney decomposition, keeps the exterior distance
variable on a full half-line, removes the oscillations by modulation, and
approximates the resulting scale-free Hankel kernel by Bernstein ellipses,
with a separate Taylor-rank estimate for the boundary layer.

What this route adds to the $c^{d-1}LR$ part of
\cite[Thm.~1.3]{KDL} in $d=1$ is an explicit constant, with its dependence
on the geometry displayed, and a single statement covering all $c>0$ and
all $0<\eps<\tfrac12$. It is not an improvement on KDL's sharper
very-small-threshold description; its advantages there are the uniform
formulation and explicit constants. On the $LR$ range, KDL assumes $c\ge2$
and $\alpha^{-c}<\eps<\tfrac12$ and states the bound with an unspecified
implied constant.

\subsection{Main results}

We write $\lnp x=\max(0,\ln x)$ and $\logp x = \max(0,\log_2 x)$.

\begin{theorem}\label{thm:main}
Let $A_0,B_0\subset\R$ be bounded measurable sets of positive measure with
$\partial A_0$ and $\partial B_0$ finite. Let $M$ and $K$ be the number of
connected components of $\operatorname{int}A_0$ and
$\operatorname{int}B_0$ respectively, let $a$ be the maximal component
length of $\operatorname{int}A_0$ and $b$ the maximal component length of
$\operatorname{int}B_0$. Then for all $c>0$ and all $0<\eps<1/2$,
\[
\Lambda_\eps(cA_0,B_0)\;\le\;
C_0\,MK\,(1+b)\,\widetilde L\,
\Bigl(1+\lnp\frac{c\,a}{\widetilde L}\Bigr),
\qquad
\widetilde L:=\ln\frac{1}{\eps(1-\eps)},
\]
where $C_0$ is an absolute constant; one may take $C_0=63$.
\end{theorem}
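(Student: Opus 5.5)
We reduce the plunge count to a Schatten quasi-norm of the off-diagonal factor $T=P_{A^c}Q_BP_A$. Since $Q_B$ is a projection, $S-S^2=P_AQ_BP_{A^c}Q_BP_A=T^*T$. Every eigenvalue $\lambda$ in the plunge range satisfies $\lambda(1-\lambda)>\eps(1-\eps)$. Hence $\Lambda_\eps\le\#\{n:\ s_n(T)^2>\eps(1-\eps)\}$, where $s_n(T)$ are the singular values of $T$. It therefore suffices to prove a singular-value estimate of the form
\[
s_n(T)\le C\,MK\,\exp\Bigl(-\frac{n}{C'(1+b)(1+\lnp(ca/n))}\Bigr),
\]
or equivalently that the number of singular values above $e^{-\widetilde L/2}$ is $O\bigl(MK(1+b)\widetilde L(1+\lnp(ca/\widetilde L))\bigr)$. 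To get there we decompose $T$, bound each piece, and then add the pieces by Ky Fan's inequality $s_{m+n-1}(X+Y)\le s_m(X)+s_n(Y)$, summed over a controlled number of pieces.

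\textbf{Decomposition.} Write $A=\bigcup_{i\le M}I_i$ and $B=\bigcup_{k\le K}J_k$ as disjoint intervals. Then $T$ is a sum of $MK$ operators $P_{A^c}Q_{J_k}P_{I_i}$, and up to a factor of $MK$ we may treat a single pair. Here $I=[0,\ell]$ with $\ell\le ca$, and $J=[\beta,\beta+w]$ with $w\le b$. The kernel is
\[
\frac{e^{2\pi i(\beta+w)(x-y)}-e^{2\pi i\beta(x-y)}}{2\pi i(x-y)}.
\]
This is the oscillation factorization: after modulation it is two copies of the Cauchy kernel $1/(2\pi(x-y))$. For $x\in A^c$ on one side of $I$, set $s=\dist(x,I)$ and $r=\dist(y,\partial I)$ measured from the nearer endpoint; the kernel becomes the Hankel kernel $1/(2\pi(s+r))$ times unimodular multiplications, which do not change singular values. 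The part of $A^c$ lying inside other $I_{i'}$ is handled by a harmless restriction, since $P_{A^c}\le P_{I^c}$ as a compression.

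\textbf{Bounding the pieces.} Split $(s,r)$ dyadically. Fix a boundary-layer width $\delta$, to be chosen of order $\widetilde L/(1+b)$. On the far field $r\ge\delta$, group the $r$-variable into dyadic scales $[2^j\delta,2^{j+1}\delta]$, $j\le\logp(\ell/\delta)$. On each block the kernel $1/(s+r)$, with $s$ on the full half-line, is analytic in $r$ on a fixed relative Bernstein ellipse, uniformly in $s\ge0$. Chebyshev approximation in $r$ then gives rank-$m$ approximants with error $\lesssim\rho^{-m}$. This is scale-free, because $1/(s+r)$ is homogeneous, so rescaling $s,r$ by $2^j\delta$ preserves the $L^2$ operator norm. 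Each dyadic block therefore has $s_m\le C\rho^{-m}$, and the modulations at frequencies $\beta$ and $\beta+w$ at most double the rank.

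On the boundary layer $r<\delta$, the kernel is instead expanded by Taylor expansion of the difference quotient $(e^{2\pi i w t}-1)/t$, with $t=x-y$ and $|t|\lesssim$ (layer width). This needs $\gtrsim w\delta+\widetilde L$ terms to reach accuracy $e^{-\widetilde L}$. The part with $s$ large but $r<\delta$ is absorbed into the Hankel treatment with the roles of $s$ and $r$ swapped, since the kernel is symmetric.

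Choosing $\delta\asymp\widetilde L/(1+b)$ gives the following counts:
\begin{itemize}[label={}]
\item the boundary layer costs $O((1+b)\delta+\widetilde L)=O(\widetilde L)$ singular values;
\item there are $1+\lnp(ca/\widetilde L)$ dyadic far-field scales, each costing $O(\widetilde L)$ singular values at threshold $e^{-\widetilde L/2}/(\text{number of pieces})$.
\end{itemize}
The logarithmic loss from dividing the threshold among pieces is absorbed into the constant, because the number of pieces is at most polynomial in $\widetilde L$-normalized scales.

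\textbf{Main obstacle.} The hardest step is keeping the far-field Bernstein-ellipse bound genuinely uniform. It must hold with $s$ ranging over the whole half-line, near $r$-scales, and with errors measured in operator (not Hilbert--Schmidt) norm, so that summing $\lnp(ca/\widetilde L)$ blocks does not lose an extra logarithm. I would first try to prove the $L^2$ bound on each block by a Schur test on the Chebyshev remainder, which is bounded by $\rho^{-m}\cdot\frac{1}{s+r}$ times a constant. The Hankel operator $1/(s+r)$ on a dyadic $r$-block is bounded uniformly, by Hilbert's inequality. Finally, tracking the constants through Ky Fan's inequality should give $C_0=63$.
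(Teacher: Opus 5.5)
Most of your ingredients match the paper: the identity $S-S^2=T^*T$, domination by single-interval pieces via $P_{A^c}=P_{A^c}P_{I^c}$, and the factorization of each far-field kernel into two modulated copies of $1/(2\pi(s+r))$. So do the Chebyshev approximation in $r$ alone with $s$ on the full half-line, and a Taylor-rank boundary layer of width $\asymp\widetilde L$. One correction: $r$ must be the distance from $y$ to the endpoint of $I$ facing $x$, so that $r$ ranges over all of $(0,\ell)$. With $r=\dist(y,\partial I)$ as written, $|x-y|$ is not $s+r$.

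\textbf{The gap is the assembly by Ky Fan.} Ky Fan yields $n(\sum_jt_j;\sum_jX_j)\le\sum_jn(t_j;X_j)$. If all you know about a piece is $s_m(X_j)\le C\rho^{-m}$, then $n(t_j;X_j)\approx\log_\rho(C/t_j)$. By concavity of $\ln$, $\sum_j\ln(1/t_j)\ge P\ln(P/t)$ whenever $\sum_jt_j=t$, so no allocation of the threshold beats the uniform split. The best this route gives is therefore $\asymp P(\widetilde L+\ln P)$, with $P\asymp MK(1+\lnp(ca/\widetilde L))$ pieces.

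The excess $P\ln P$ is not $O(P\widetilde L)$, because $\widetilde L$ can stay bounded while $P\to\infty$. For $\eps=1/4$, $M=K=1$, $b=1$ and $ca\to\infty$, your bound is $\asymp\ln(ca)\ln\ln(ca)$, whereas the theorem asserts $O(\ln(ca))$. For fixed $c,\eps$ and many components you get $MK\ln(MK)$ rather than $O(MK)$. So the claim that the loss from dividing the threshold ``is absorbed into the constant'' is false, and no absolute $C_0$, let alone $63$, comes out.

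\textbf{The missing idea is to count in the Schatten quasi-norm at the $\eps$-tuned exponent $p=1/\widetilde L$.} This works for three reasons:
\begin{itemize}
\item Rotfel'd's inequality $\|X+Y\|_p^p\le\|X\|_p^p+\|Y\|_p^p$ adds pieces without splitting any threshold.
\item A piece with $s_n\le\min(1,C\rho^{-n})$ costs only $O(1/p)=O(\widetilde L)$.
\item Since $t=\sqrt{\eps(1-\eps)}=e^{-\widetilde L/2}$, Markov gives $\Lambda_\eps\le n(t;T)\le t^{-p}\|T\|_p^p=e^{1/2}\|T\|_p^p$.
\end{itemize}
This is how the paper sums three kinds of pieces: the $2MK$ one-sided pieces, the $1+\logp(\ell p)$ dyadic far-field blocks (each at most $C_\sharp/p$ with $C_\sharp\le4.5$), and the boundary layer of width $D=1/p$ (at most $(\pi eb'+7)/p$). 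That is what produces $63$.

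Your ``main obstacle'' is correspondingly misplaced. Per-block Hilbert--Schmidt remainders are already uniform in the scale, since after rescaling the $s$-weight is $\int_0^\infty ds/(s+1)^2<\infty$. A per-block Schur test followed by Ky Fan still incurs the loss above. You could merge all far-field Chebyshev remainders of one pair into a single kernel $\lesssim\rho^{-m}/(s+r)$ and apply Hilbert's inequality once. That would remove the loss across dyadic scales, but Ky Fan across the $MK$ pairs would still cost an extra $\ln(MK)$ per pair.
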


The quantities $M,K,a,b$ are read off a chosen representative of $A_0,B_0$
and are not invariant under modification by a null set, whereas
$S_{cA_0,B_0}$, and hence $\Lambda_\eps$, are. Deleting one interior point
of $A_0=(0,1)$ leaves the operator untouched but replaces $(M,a)=(1,1)$ by
$(2,\tfrac12)$. The theorem holds for every representative with finite
boundary, so its right-hand side may be read with the infimum taken over
all such representatives.

Note $\ln(1/\eps)\le \widetilde L\le \ln(1/\eps)+\ln 2$, so
$\widetilde L\asymp L:=\ln(1/\eps)$. Theorem~\ref{thm:main} requires no
lower bound on $c$ and no relation between $\eps$ and $c$. Specializing:

\begin{corollary}[The Kulikov--Dam Larsen bound in $d=1$]\label{cor:kdl}
With $A_0,B_0$ as in Theorem~\ref{thm:main}, for every $\alpha\ge4$,
$c\ge2$ and $\alpha^{-c}<\eps<1/2$,
\[
\Lambda_\eps(cA_0,B_0)\;\le\;
C(A_0,B_0)\,\log\frac1\eps\,
\log\!\Bigl(\frac{\alpha c}{\log(1/\eps)}\Bigr),
\]
with $C(A_0,B_0)= 4 C_0\,MK\,(1+b)(2+\lnp a)$, valid when $\log$ is read as
$\ln$ or as $\log_2$. Since $c^{d-1}=1$ in $d=1$, this is the conjectured
bound; as noted above it also follows from \cite[Thm.~1.3]{KDL}, and the
point of stating it here is that it comes out of the present proof with an
explicit constant.
\end{corollary}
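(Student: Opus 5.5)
The corollary follows from Theorem~\ref{thm:main} by elementary inequalities; no spectral input is needed. Write $L=\ln(1/\eps)$, so that $L\le\widetilde L\le L+\ln2$. Since $\eps<1/2$ we have $L>\ln2$, hence $\widetilde L\le 2L$. The hypothesis $\eps>\alpha^{-c}$ gives $L<c\ln\alpha$, so $\alpha c/L>1/\ln\alpha\cdot\alpha$. In fact $\alpha c/L>\alpha/\ln\alpha\ge 4/\ln4>e$ for $\alpha\ge4$. Thus $R:=\ln(\alpha c/L)\ge1$.

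Theorem~\ref{thm:main} gives $\Lambda_\eps\le C_0MK(1+b)\,\widetilde L\,(1+\lnp(ca/\widetilde L))$, and it remains to bound the last two factors.

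\emph{First factor.} Use $\widetilde L\le 2L$.

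\emph{Second factor.} Since $\widetilde L\ge L$, we have
\[
\lnp\frac{ca}{\widetilde L}\le\lnp\frac{ca}{L}\le \lnp\frac{c}{L}+\lnp a .
\]
Because $\alpha\ge4>1$, also $\lnp(c/L)\le\ln(\alpha c/L)=R$. Hence
\[
1+\lnp\frac{ca}{\widetilde L}\le 1+R+\lnp a\le R(2+\lnp a),
\]
where the last step uses $R\ge1$, giving $1\le R$ and $\lnp a\le R\lnp a$.

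Combining the two factors gives $\Lambda_\eps\le 2C_0MK(1+b)(2+\lnp a)\,L R$ with natural logarithms. This is within the claimed constant $4C_0MK(1+b)(2+\lnp a)$.

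For base $2$, note that $\log_2(1/\eps)\,\log_2(\alpha c/\log_2(1/\eps))$ needs comparing with $LR$. We have $\log_2 x=x'/\ln 2\ge L$, where $x'=L$. For the second factor, $\alpha c/\log_2(1/\eps)=\ln2\cdot\alpha c/L$. So
\[
\log_2\!\Bigl(\frac{\alpha c}{\log_2(1/\eps)}\Bigr)=\frac{R+\ln\ln2}{\ln2},
\]
and $\ln\ln2\approx-0.367$. Using $R\ge\ln(\alpha/\ln\alpha)\ge\ln(4/\ln4)\approx1.06$, we get $R+\ln\ln 2\ge R/2$ (this needs $R\ge 0.73$, which holds). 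So the base-$2$ quantity is at least $L\cdot R/(2\ln2)\ge LR/2$. This costs at most the remaining factor $2$ and yields the constant $4C_0$.

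The only delicate point is this base-$2$ bookkeeping. It requires confirming that $\alpha\ge4$ keeps $\alpha/\ln\alpha$ large enough that the $\ln\ln2$ shift is absorbed; the computation above indicates it is, with a little room to spare.
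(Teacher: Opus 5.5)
Your argument is correct and is essentially the paper's proof. You use the same inputs $L\le\widetilde L\le 2L$ and $R=\ln(\alpha c/L)>\ln(4/\ln 4)>1$, and the same absorption of $1+\lnp(ca/\widetilde L)$ into $(2+\lnp a)R$; you split $ca/L$ as $(c/L)\cdot a$, where the paper uses $(\alpha c/L)\cdot(a/\alpha)$. This gives the constant $2C_0$ with natural logarithms, and you spend the spare factor $2$ on the base-$2$ conversion, as the paper does. Your base-$2$ step is slightly cruder than the paper's: you discard one factor $1/\ln 2$ and so need $R\ge 2|\ln\ln 2|\approx 0.73$ rather than $R\ge 0.48$. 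That is harmless, though the garbled line should simply read $\log_2(1/\eps)=L/\ln 2\ge L$.
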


\begin{remark}\label{rem:antecedents}
For a single pair of intervals and fixed $\eps$, Theorem~\ref{thm:main}
gives $\Lambda_\eps=O(\ln(1/\eps)\ln c)$, the classical Landau--Widom order
\cite{LandauWidom}; what the theorem adds is uniformity in $(\eps,c)$
jointly, down to $\eps$ exponentially small in $c$, with the self-improving
factor $\ln(ca/\widetilde L)$ in place of $\ln(ca)$.

The $d=1$ literature is substantial, and the comparison deserves to be
precise. Quantitative non-asymptotic antecedents include the two-sided
eigenvalue bounds of Israel \cite{Israel15}, the upper bounds of Osipov
\cite{Osipov}, the spectral-decay estimates of Bonami--Karoui
\cite{BonamiKaroui}, and, in the pre-plunge regime, the sharp estimates of
Kulikov \cite{Kulikov24,Kulikov26}. Two comparisons bear directly on what is
claimed here. Karnik--Romberg--Davenport \cite{KRD} give, for a \emph{single}
pair of bounded intervals, a completely explicit bound valid for every $c$
and all $\eps\in(0,\tfrac12)$, with a numerical constant far better than
ours; it lacks the self-improving logarithm, so it does not degenerate to
$O(\widetilde L)$ once $\widetilde L\ge ca$, and it says nothing about
unions. More directly, \cite[Thm.~1.3]{KDL} covers the whole hypothesis
class of Theorem~\ref{thm:main} --- not merely the interval case --- and
yields the same shape of bound. The higher-dimensional programme of Hughes,
Israel and Mayeli \cite{HIM} belongs to the same circle of ideas but is
aimed at $d\ge2$.

What this paper contributes is accordingly the route rather than the
inequality, together with the constants that route produces: they are
explicit, depend on the geometry only through $(M,K,a,b)$, and allow finite
unions on \emph{both} sides. The statement covers all $c>0$ and all
$0<\eps<\tfrac12$ with no relation imposed between them, and
Corollary~\ref{cor:kdl} holds for
\emph{every} $\alpha\ge4$ with a constant independent of $\alpha$, where
\cite[Thm.~1.3]{KDL} asserts the existence of one such $\alpha$.
\end{remark}

\subsection{Strategy}\label{sec:strategy}

Write $T=P_{A^c}Q_BP_A$, so that $S-S^2=T^*T$ and eigenvalues of $S$ in
$(\eps,1-\eps)$ produce singular values of $T$ above
$t=\sqrt{\eps(1-\eps)}$ (Lemma~\ref{lem:plunge}). All work happens on the
singular values $s_n(T)$, which we control in the Schatten quasi-norm
$\|T\|_p^p=\sum_n s_n(T)^p$ at the $\eps$-tuned exponent
\[
p=\frac{1}{\ln\bigl(1/(\eps(1-\eps))\bigr)}=\frac1{\widetilde L}\in(0,1],
\qquad\text{so that}\qquad
t^{-p}=e^{1/2},
\]
whence $\Lambda_\eps\le e^{1/2}\|T\|_p^p$ by quasi-norm Markov. This opening
is not ours. The reduction to $T=P_{A^c}Q_BP_A$, the passage to Schatten
$p$-quasi-norms at exactly this $\eps$-tuned exponent, the resulting factor
$e^{1/2}$, the use of subadditivity to reassemble the pieces, and the
observation that enlarging the outer set only increases singular values are
all in \cite[\S1.4]{KDL}, where the $p$-quasi-norm method is in turn
credited to Hughes, Israel and Mayeli \cite{HIM}. The present reorganization
begins with the exact one-dimensional reductions in move (1); the
scale-uniform separated-kernel estimate in move (2) has the KDL analogue
described above, but is obtained here on a different geometry by a
Hankel--Bernstein argument. The proof of
\[
\|T\|_p^p\;\lesssim\;p^{-1}\bigl(1+\lnp(ca/\widetilde L)\bigr)
\;=\;\widetilde L\bigl(1+\lnp(ca/\widetilde L)\bigr)
\]
has four moves.

\emph{(1) Exact reductions.} Frequency components of $B$ are split off by
Rotfel'd subadditivity and each is centred by a modulation (which commutes
with all spatial projections); spatial components of $A$ are split off
likewise, using $P_{A^c}=P_{A^c}P_{I^c}$ to dominate each piece by the
single-interval off-diagonal operator $P_{I^c}Q_{B^\circ}P_I$. A
left/right split and the substitutions $s=\dist(x,I)$, $r=|y-q|$, where
$q\in\partial I$ is the endpoint facing the chosen side, turn each one-sided
piece into the \emph{Hankel-type} operator
$\Gamma:L^2(0,\ell)\to L^2(0,\infty)$ with kernel
$\sin(\pi b(s+r))/(\pi(s+r))$. All steps preserve or dominate singular
values exactly (Section~\ref{sec:reductions}).

\emph{(2) Oscillation factorization (the $d=1$ mechanism).} Since the
kernel depends on $s,r$ only through $s+r$,
\[
\frac{\sin(\pi b(s+r))}{\pi(s+r)}
=\underbrace{e^{i\pi bs}}_{\text{unimodular in }s}
\underbrace{e^{i\pi br}}_{\text{unimodular in }r}\frac{1}{2\pi i(s+r)}
\;-\;
e^{-i\pi bs}\,e^{-i\pi br}\,\frac{1}{2\pi i(s+r)} .
\]
Multiplication by a unimodular function of the output (resp.\ input)
variable is unitary, so each of the two summands has \emph{exactly} the
singular values of the bandwidth-free Hankel kernel $1/(2\pi(s+r))$. The
far-field piece is their difference, and Ky Fan converts the statement about
the summands into a bound on the piece that is uniform in the bandwidth
(Section~\ref{sec:keyestimates}).

\emph{(3) Two quantitative estimates.} (a) A Bernstein-ellipse/Chebyshev
expansion in the $r$-variable alone gives, uniformly in the scale $h>0$,
\[
s_{N+1}\Bigl(\tfrac{1}{2\pi(s+r)}\ \text{on}\
(0,\infty)\times[h,2h]\Bigr)\;\le\;4^{-N},
\]
hence each dyadic far-field piece $A_k=\Gamma P_{(2^kD,2^{k+1}D]}$ obeys
$s_n(A_k)\le 2^{3-n}$ and $\|A_k\|_p^p\le 4.5/p$. (b) A Taylor-rank bound
on $Q_{B^\circ}P_{(0,D)}$ shows that at the width $D=1/p$ used below the
boundary layer satisfies $\|\Gamma P_{(0,D)}\|_p^p\le(\pi e\,b+7)/p$
(Section~\ref{sec:keyestimates}).

\emph{(4) Assembly with the boundary-layer width $D=1/p=\widetilde L$.}
Rotfel'd subadditivity over the boundary layer and the
$S=\lceil\logp(\ell/D)\rceil$ dyadic far-field scales gives
$\|\Gamma\|_p^p\le C_b(1+\logp(ca/\widetilde L))/p$; Markov finishes. The
choice $D=\widetilde L$ is what converts the naive factor $\ln(ca)$ into
$\ln(ca/\widetilde L)$: eigenvalue-counting depth $\widetilde L$ is traded
against geometric scales below width $\widetilde L$, which are absorbed
into a single rank-$O(\widetilde L)$ block
(Section~\ref{sec:assembly}).

A decomposition that is dyadic in \emph{one variable only}, with the other
variable kept global on a half-line, is essential: the per-scale estimate
(3a) is then uniform and exponential, and plain $p$-subadditivity across the
$O(\log)$ scales suffices. No almost-orthogonality is used. Exact
orthogonality of both initial and final spaces makes Rotfel'd subadditivity
an equality, so orthogonality alone cannot improve this generic
exponent-$p$ estimate (Remark~\ref{rem:cotlar}).

Section~\ref{sec:d1} states exactly which steps fail in $d>1$ and why.

\section{Preliminaries}\label{sec:prelim}

\subsection{Singular values}
For a compact operator $X$ between Hilbert spaces, $s_1(X)\ge s_2(X)\ge
\cdots$ denote its singular values (eigenvalues of $(X^*X)^{1/2}$, with
multiplicity), equivalently approximation numbers
$s_{n+1}(X)=\inf\{\|X-F\|:\rank F\le n\}$. Set
\[
n(t;X)=\#\{n\ge1:\ s_n(X)>t\},\qquad
\|X\|_p^p=\sum_{n\ge1}s_n(X)^p\quad(0<p\le1).
\]

\begin{lemma}\label{lem:svcalc}
Let $X,Y$ be compact, $U,V$ bounded. Then:
\begin{enumerate}[label=(\roman*),itemsep=1pt]
\item $s_n(UXV)\le\|U\|\,\|V\|\,s_n(X)$ for all $n$.
\item (Fan \cite{Fan,GK}) $s_{m+n+1}(X+Y)\le s_{m+1}(X)+s_{n+1}(Y)$.
\item $s_{N+1}(X)\le\|X-F\|\le\|X-F\|_{\HS}$ for every $F$ with
$\rank F\le N$.
\item (Rotfel'd \cite{Rotfeld}; see also \cite{SimonTI})
$\|X+Y\|_p^p\le\|X\|_p^p+\|Y\|_p^p$ for $0<p\le1$.
\item (Markov) $n(t;X)\le t^{-p}\|X\|_p^p$ for every $t>0$, $0<p\le1$.
\item If $U,V$ are multiplications by unimodular functions on the output
and input spaces respectively, then $s_n(UXV)=s_n(X)$ for all $n$.
\end{enumerate}
\end{lemma}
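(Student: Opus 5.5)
These six items are standard facts about singular values, so I would prove them in order, deriving each from the approximation-number characterization $s_{n+1}(X)=\inf\{\|X-F\|:\rank F\le n\}$ where possible and citing the literature for the one genuinely nontrivial item, (iv).

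For (i), if $\rank F\le n-1$ then $\rank(UFV)\le n-1$. Hence
\[
s_n(UXV)\le\|UXV-UFV\|\le\|U\|\|V\|\|X-F\|,
\]
and taking the infimum over $F$ gives the claim.

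For (ii), choose $F$ with $\rank F\le m$ and $G$ with $\rank G\le n$ that are near-optimal for $X$ and $Y$. Then $\rank(F+G)\le m+n$, so
\[
s_{m+n+1}(X+Y)\le\|X-F\|+\|Y-G\|,
\]
and letting the approximations become optimal gives Fan's inequality.

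For (iii), the first inequality is the definition of approximation numbers. The second is $\|Z\|\le\|Z\|_{\HS}$, which holds because $\|Z\|^2=s_1(Z)^2\le\sum_n s_n(Z)^2$.

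For (v), every index counted in $n(t;X)$ has $s_n(X)>t$, so it contributes more than $t^p$ to $\|X\|_p^p$. This gives $t^p\,n(t;X)\le\|X\|_p^p$.

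For (vi), multiplication by a unimodular function is unitary, so $U$ and $V$ are unitary. Then $(UXV)^*(UXV)=V^*X^*XV$ is unitarily equivalent to $X^*X$ and has the same eigenvalues. Alternatively, apply (i) to $UXV$ and to $U^{-1}(UXV)V^{-1}=X$ to get inequalities in both directions.

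The only nontrivial item is (iv), the Rotfel'd inequality. It follows from the fact that $\phi(x)=x^p$ is concave and increasing on $[0,\infty)$ with $\phi(0)=0$, together with the Rotfel'd/Thompson trace inequality $\operatorname{tr}\phi(|X+Y|)\le\operatorname{tr}\phi(|X|)+\operatorname{tr}\phi(|Y|)$ for such $\phi$. I would not reprove this; I would cite \cite{Rotfeld} and \cite[Ch.~1]{SimonTI}.

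If a self-contained argument were wanted, I would first reduce to finite rank by truncating the singular value expansions and using lower semicontinuity of $\|\cdot\|_p^p$ under norm limits. I would then use Thompson's theorem: there exist unitaries $W_1,W_2$ with
\[
|X+Y|\le W_1|X|W_1^*+W_2|Y|W_2^*.
\]
This reduces the claim to subadditivity of $\operatorname{tr}\phi$ on positive operators, which holds for concave $\phi$ with $\phi(0)=0$ (McCarthy/Rotfel'd). This is the only step requiring real work, and citing it is the intended route.
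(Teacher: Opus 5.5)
Your proposal is correct and follows the paper's proof item by item: the approximation-number argument for (i)--(iii), the counting argument for (v), unitarity for (vi), and a citation of Rotfel'd's inequality for (iv). The optional Thompson-theorem route you sketch for (iv) is sound and goes beyond what the paper provides, but it is not needed, since the paper also just cites Rotfel'd.
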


\begin{proof}
(i) and (iii) are immediate from the approximation-number characterization
($\|U(X-F)V\|\le\|U\|\|V\|\|X-F\|$ and $\rank(UFV)\le\rank F$; for (iii),
$\|\cdot\|\le\|\cdot\|_{\HS}$). (ii) is Ky Fan's inequality: if
$\rank F\le m$, $\rank G\le n$ then $\rank(F+G)\le m+n$ and
$\|X+Y-(F+G)\|\le\|X-F\|+\|Y-G\|$; take infima. (iv) is the Rotfel'd
inequality. (v): each of the $n(t;X)$ terms with $s_n>t$ contributes more
than $t^p$ to $\|X\|_p^p$. (vi): $U,V$ are unitary.
\end{proof}

We will also use repeatedly that if $\Pi$ is an orthogonal projection then
$s_n(X\Pi)\le s_n(X)$ and $s_n(\Pi X)\le s_n(X)$, a special case of (i).

\subsection{Structure of the sets and compactness}

\begin{lemma}\label{lem:structure}
Let $E\subset\R$ be bounded measurable with $\partial E$ finite and
$|E|>0$. Then there exist $1\le m\le\#\partial E$ disjoint bounded open
intervals $J_1,\dots,J_m$ such that $E\,\triangle\,(J_1\cup\dots\cup J_m)$
is a finite set, hence Lebesgue-null. Consequently
$P_E=P_{J_1}+\dots+P_{J_m}$ and $Q_E=Q_{J_1}+\dots+Q_{J_m}$ as operators
on $L^2(\R)$.
\end{lemma}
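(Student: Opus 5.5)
The plan is to work directly with the finite set $\partial E=\{x_1<\dots<x_N\}$. These points cut $\R$ into the open gaps $G_0=(-\infty,x_1)$, $G_j=(x_j,x_{j+1})$ for $1\le j\le N-1$, and $G_N=(x_N,\infty)$. The key observation is that each gap $G$ is connected and does not meet $\partial E$. Since $G=(G\cap\operatorname{int}E)\cup(G\cap\operatorname{int}E^c)$ is a disjoint union of two relatively open sets, connectedness gives $G\subset\operatorname{int}E$ or $G\subset\operatorname{int}(E^c)$. Because $E$ is bounded, the two unbounded gaps $G_0,G_N$ are not contained in $E$, so they lie in $\operatorname{int}(E^c)$. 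Hence
\[
E\,\triangle\,\bigcup\{G_j:\ 1\le j\le N-1,\ G_j\subset E\}\subset\partial E,
\]
which is finite. The case $\partial E=\emptyset$ cannot occur: then $\R$ itself would be a gap, so $E=\emptyset$ or $E=\R$, contradicting $|E|>0$ and boundedness.

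Next I merge adjacent selected gaps. If $G_{j-1}$ and $G_j$ are both contained in $E$, I replace them by the single open interval $(x_{j-1},x_{j+1})$; this changes the union only by the point $x_j$. After merging, the selected gaps form $m$ disjoint bounded open intervals $J_1,\dots,J_m$, and each $J_i$ has a distinct left endpoint in $\partial E$, so $m\le N=\#\partial E$. Also $m\ge1$, since otherwise $E$ would be contained in the finite set $\partial E$, giving $|E|=0$. Merging is optional for the statement, but it makes the $J_i$ the components of $\operatorname{int}E$, which is the form used later in the paper.

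The operator identities then follow from the fact that $E\triangle\bigcup J_i$ is null. Indeed $\ind{E}=\sum_i\ind{J_i}$ almost everywhere, because the $J_i$ are disjoint, so $P_E=\sum_iP_{J_i}$ on $L^2(\R)$. Conjugating by $\F$ gives $Q_E=\F^{-1}P_E\F=\sum_i\F^{-1}P_{J_i}\F=\sum_iQ_{J_i}$.

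There is no real obstacle here. The only point needing care is the connectedness argument showing that each gap lies entirely in $\operatorname{int}E$ or entirely in $\operatorname{int}(E^c)$, which relies on $\R\setminus\partial E=\operatorname{int}E\sqcup\operatorname{int}(E^c)$. The count $m\le\#\partial E$ is the bookkeeping step of assigning each interval its left endpoint.
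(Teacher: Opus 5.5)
Your proof is correct, but it runs in the opposite direction from the paper's.

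The paper starts from the open set $U=\operatorname{int}E$ and writes it as a countable disjoint union of open intervals, its components. It shows that each endpoint of a component lies in $\partial E$, obtains finiteness and $m\le\#\partial E$ by counting endpoint incidences, and then notes $E\triangle U\subset\partial E$.

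You start instead from the finite set $\partial E$ and cut $\R\setminus\partial E$ into finitely many gaps. Connectedness together with $\R\setminus\partial E=\operatorname{int}E\sqcup\operatorname{int}(E^c)$ then puts each gap wholly on one side. This avoids the structure theorem for open subsets of $\R$ and makes finiteness immediate. It even gives $m\le\#\partial E-1$, since only bounded gaps can be selected. The paper's route, for its part, produces the components of $\operatorname{int}E$ by construction. That is the form needed in Section~\ref{sec:reductions}, where the intervals are counted by $M$ and have lengths at most $a$.

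One side remark of yours is backwards, though it does not affect the lemma as stated. Your unmerged selected gaps are already exactly the components of $\operatorname{int}E$. Their union is $\operatorname{int}E$, and consecutive ones are separated by a point $x_j\in\partial E$, which is not interior. Merging $G_{j-1}$ and $G_j$ into $(x_{j-1},x_{j+1})$ swallows $x_j\notin\operatorname{int}E$, so the merged intervals are the components of $\operatorname{int}\overline E$ instead.

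For $E=(0,1)\cup(1,2)$, merging gives $(0,2)$, while $\operatorname{int}E$ has two components of length $1$. Using merged intervals downstream would violate the bound $\ell_i\le ca$, with $a$ as defined in Theorem~\ref{thm:main}. So simply omit the merging step.
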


\begin{proof}
$U:=\operatorname{int}E$ is open, hence a countable disjoint union of open
intervals $J_k$ (its connected components), each bounded since $E$ is. Let
$J=(\alpha,\beta)$ be a component. Then $\alpha\in\overline{U}\subset
\overline E$ and $\alpha\notin\operatorname{int}E$ (otherwise a
neighbourhood of $\alpha$ lies in $U$ and $J$ would not be a maximal
component), so $\alpha\in\partial E$; similarly $\beta\in\partial E$. A
point of $\R$ is the left endpoint of at most one component and the right
endpoint of at most one component, so counting endpoint incidences,
$2m\le2\,\#\partial E$, i.e.\ the number $m$ of components is at most
$\#\partial E$; in particular it is finite. Finally
$E\subset\operatorname{int}E\cup\partial E$ and
$\operatorname{int}E\subset E\cup\partial E$ give
$E\,\triangle\,U\subset\partial E$, a finite set. $|E|>0$ forces $m\ge1$.
Multiplication operators are insensitive to null modifications, and
$\ind{U}=\sum_k\ind{J_k}$ pointwise gives $P_E=\sum_kP_{J_k}$; conjugating
by $\F$ gives the statement for $Q$.
\end{proof}

\begin{remark}\label{rem:mink}
For compact $F\subset\R$, the upper $0$-dimensional Minkowski content
$\mathcal M^{*0}(F)=\limsup_{\delta\downarrow0}|F_\delta|/(2\delta)$
(where $F_\delta$ is the $\delta$-neighbourhood) is finite iff $F$ is
finite. Indeed, if $F=\{x_1,\dots,x_m\}$ then $|F_\delta|\le2\delta m$. If
$F$ is infinite, choose any $m$ distinct points of $F$; for $\delta$ less
than half their minimal separation, $|F_\delta|\ge2\delta m$, so
$\mathcal M^{*0}(F)\ge m$ for every $m$. Thus the hypothesis of
\cite{KDL} (finite upper Minkowski boundary content) coincides in $d=1$
with the hypothesis of Theorem~\ref{thm:main}.
\end{remark}

\begin{lemma}\label{lem:compact}
$S=P_AQ_BP_A$ is self-adjoint, $0\le S\le I$, and Hilbert--Schmidt; in
particular compact. The off-diagonal operator $T=P_{A^c}Q_BP_A$ is
Hilbert--Schmidt as well, with
$\|T\|_{\HS}^2\le|A|\,|B|$.
\end{lemma}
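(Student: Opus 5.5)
Self-adjointness and the operator bounds come straight from the projection structure; Hilbert--Schmidt membership comes from writing $P_AQ_B$ as an integral operator with an explicit square-integrable kernel. Since $P_A$ and $Q_B$ are orthogonal projections, $S^*=(P_AQ_BP_A)^*=P_AQ_BP_A=S$. For any $f$,
\[
\langle Sf,f\rangle=\langle Q_BP_Af,P_Af\rangle=\|Q_BP_Af\|^2\ge0,
\]
and $\|Q_BP_Af\|^2\le\|P_Af\|^2\le\|f\|^2$. Together these give $0\le S\le I$.

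Next we identify the kernel. $Q_B$ acts on $L^2(\R)$ as convolution with $\check{\ind B}=\F^{-1}\ind B$; this is legitimate because $\ind B\in L^1\cap L^2$, since $B$ is bounded. Hence the operator $P_{E}Q_BP_A$, for any measurable $E$, has kernel
\[
k(x,y)=\ind E(x)\,\check{\ind B}(x-y)\,\ind A(y).
\]
To justify this formula rigorously, first check it on $f\in L^1\cap L^2$ using Fubini, then extend by density once the kernel is known to lie in $L^2$.

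Now compute the Hilbert--Schmidt norm with $E=\R$. By Fubini and Plancherel,
\[
\iint|\check{\ind B}(x-y)|^2\ind A(y)\,dx\,dy=|A|\,\|\check{\ind B}\|_2^2=|A|\,|B|,
\]
which is finite because $A=cA_0$ is bounded. So $P_AQ_B$ is Hilbert--Schmidt; this is its adjoint $Q_BP_A$ composed on the left by $P_A$, or more precisely the operator above with $E=\R$ and the roles read off the kernel. The key point is that one projection factor being a bounded set, together with $\check{\ind B}\in L^2$, already gives a square-integrable kernel.

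From this, $S=P_A(Q_BP_A)$ is Hilbert--Schmidt, since $\|S\|_{\HS}\le\|P_A\|\,\|Q_BP_A\|_{\HS}$ by the ideal property. In particular $S$ is compact.

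For $T$, take $E=A^c$ and simply drop the factor $\ind{A^c}(x)\le1$ in the integral above. This gives
\[
\|T\|_{\HS}^2\le|A|\,|B|.
\]

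The only delicate point is the kernel identification of $Q_BP_A$ as an integral operator, which the density argument above handles. Everything else is routine.
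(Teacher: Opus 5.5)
Your proof is correct and follows essentially the paper's route: positivity from $S=(Q_BP_A)^*(Q_BP_A)$, and the convolution kernel $\F^{-1}\ind B(x-y)$ with $\|\F^{-1}\ind B\|_2^2=|B|$. The bound $|A|\,|B|$ then comes from integrating $x$ over all of $\R$ for each fixed $y\in A$, which also handles $T$ by dropping $\ind{A^c}(x)\le1$. One slip: the kernel with $E=\R$ is that of $Q_BP_A$, not $P_AQ_B$; the latter is its adjoint and hence also Hilbert--Schmidt, so nothing breaks, and the paper bounds the kernel of $S$ directly where you use the ideal property, which amounts to the same thing.
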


\begin{proof}
$Q_B=\F^{-1}P_B\F$ is an orthogonal projection, so
$S=(Q_B^{1/2}P_A)^*(Q_B^{1/2}P_A)$ with $Q_B^{1/2}=Q_B$ shows $S\ge0$, and
$S\le P_A\le I$ in the form sense since $Q_B\le I$. Write
$g=\F^{-1}\ind{B}$; since $B$ is bounded, $\ind B\in L^2$, so
$g\in L^2(\R)$ and, $\F$ being unitary,
$\|g\|_{L^2}^2=\|\ind B\|_{L^2}^2=|B|$. Then $Q_B$ has convolution kernel
$g(x-y)$, and $S$ has kernel $\ind A(x)g(x-y)\ind A(y)$, with
\[
\iint_{A\times A}|g(x-y)|^2\,dx\,dy
\;\le\;\int_A\!\!\int_\R|g(x-y)|^2\,dx\,dy
\;=\;|A|\,\|g\|_{L^2}^2=|A||B|<\infty .
\]
The same computation applies verbatim to $T$, whose kernel is
$\ind{A^c}(x)g(x-y)\ind A(y)$: enlarging $A^c$ to $\R$ in the $x$-variable,
\[
\|T\|_{\HS}^2=\int_A\!\!\int_{A^c}|g(x-y)|^2\,dx\,dy
\;\le\;\int_A\!\!\int_\R|g(x-y)|^2\,dx\,dy=|A||B|<\infty .
\]
Finiteness here comes from $|A|<\infty$ together with $g\in L^2$; the
unboundedness of $A^c$ costs nothing, because the $x$-integration is
performed against the full line for each fixed $y\in A$. It is the
boundedness of the \emph{inner} set $A$ that is used, and this is the only
place it is needed.
\end{proof}

\subsection{From the plunge region to the off-diagonal operator}

\begin{lemma}\label{lem:plunge}
Let $T=P_{A^c}Q_BP_A$. Then $S-S^2=T^*T$ and, for $0<\eps<1/2$,
\[
\Lambda_\eps\;\le\;n\bigl(t;T\bigr),\qquad t:=\sqrt{\eps(1-\eps)} .
\]
\end{lemma}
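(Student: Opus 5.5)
The identity comes first, and it is purely algebraic. Since $P_A+P_{A^c}=I$ and $Q_B^2=Q_B$, we have
\[
S=P_AQ_BP_A=P_AQ_B(P_A+P_{A^c})Q_BP_A=S^2+P_AQ_BP_{A^c}Q_BP_A .
\]
The last term equals $(P_{A^c}Q_BP_A)^*(P_{A^c}Q_BP_A)=T^*T$, because $P_{A^c}$ and $Q_B$ are self-adjoint projections and $P_{A^c}^2=P_{A^c}$. This gives $S-S^2=T^*T$.

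For the counting statement, I would use the spectral theorem on the compact self-adjoint operator $S$ (Lemma~\ref{lem:compact}). Let $\{\phi_n\}$ be an orthonormal family of eigenvectors with $S\phi_n=\lambda_n\phi_n$. Then $T^*T\phi_n=(\lambda_n-\lambda_n^2)\phi_n$, so $T^*T$ is diagonal in the same basis. On $\ker S$ it vanishes, since $T^*T=S-S^2$ there. Hence the nonzero eigenvalues of $T^*T$, counted with multiplicity, are exactly the numbers $\lambda_n(1-\lambda_n)$ for $\lambda_n\ne1$, and the squared singular values of $T$ are these numbers together with zeros.

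The function $f(\lambda)=\lambda(1-\lambda)$ is increasing on $[0,\tfrac12]$ and symmetric about $\tfrac12$. It follows that $\eps<\lambda<1-\eps$ holds if and only if $f(\lambda)>f(\eps)=\eps(1-\eps)=t^2$. Each eigenvalue of $S$ in the plunge region therefore contributes one eigenvalue of $T^*T$ strictly greater than $t^2$, that is, one singular value of $T$ strictly greater than $t$, with multiplicities matched since the $\phi_n$ are orthonormal. So $\Lambda_\eps\le n(t;T)$. In fact equality holds, but only the inequality is needed.

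The only point needing care is the multiplicity bookkeeping. To make it clean, I would state it as: $T^*T$ restricted to the span of the eigenvectors with $\lambda_n\in(\eps,1-\eps)$ is $\ge t^2$ plus a positive gap, and then apply the min–max principle to conclude that at least $\Lambda_\eps$ singular values of $T$ exceed $t$. Finite-dimensionality of that span follows from the compactness of $S$ and $\eps>0$. I expect no genuine obstacle here.
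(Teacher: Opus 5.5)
Your proof is correct and takes the same route as the paper: the identity $S-S^2=T^*T$ from $Q_B^2=Q_B$ and $P_A+P_{A^c}=I$, then the spectral theorem for $S$, using that $\lambda(1-\lambda)>\eps(1-\eps)$ holds exactly for $\lambda\in(\eps,1-\eps)$. Your remark that equality holds is right, and the compactness of $T$ you implicitly use follows from that of $T^*T=S-S^2$; the paper instead reads it off the Hilbert--Schmidt kernel of $T$.
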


\begin{proof}
Using $P_{A^c}=I-P_A$, $P_A^2=P_A$, $Q_B^2=Q_B$:
\[
T^*T=P_AQ_BP_{A^c}Q_BP_A
=P_AQ_B(I-P_A)Q_BP_A
=P_AQ_BP_A-(P_AQ_BP_A)^2=S-S^2 .
\]
By Lemma~\ref{lem:compact}, $T$ is Hilbert--Schmidt, hence compact, so its
singular values $s_n(T)$ and counting function $n(t;T)$ are defined and
$s_n(T)^2=\lambda_n(T^*T)$. (This is read off the kernel of $T$ directly;
it is not deduced from membership of $T^*T$ in a trace ideal, which would
not give it --- $T^*T\in\mathcal S_2$ yields only $T\in\mathcal S_4$.)
$S$ is compact self-adjoint, so by the spectral theorem the nonzero
eigenvalues of $S-S^2=\varphi(S)$, $\varphi(\lambda)=\lambda-\lambda^2$,
are the numbers $\varphi(\lambda_n(S))$ with multiplicities added over
preimages. The concave parabola $\varphi$ satisfies
$\varphi(\lambda)>\varphi(\eps)=\varphi(1-\eps)=\eps(1-\eps)$ for
$\lambda\in(\eps,1-\eps)$. Hence each eigenvalue of $S$ in $(\eps,1-\eps)$
contributes (with its multiplicity) an eigenvalue of $T^*T$ exceeding
$\eps(1-\eps)$, so
$\Lambda_\eps\le\#\{n:\lambda_n(T^*T)>\eps(1-\eps)\}
=\#\{n:s_n(T)^2>\eps(1-\eps)\}=n(t;T)$.
\end{proof}

\section{Exact reductions to a Hankel-type operator}\label{sec:reductions}

By Lemma~\ref{lem:structure} we may and do replace $A_0,B_0$ by finite
disjoint unions of bounded open intervals without changing any operator:
\[
A=cA_0=\bigsqcup_{i=1}^M I_i,\qquad
B=B_0=\bigsqcup_{j=1}^K B_j,
\]
where $I_i$ has length $\ell_i=c\,|I_{0,i}|\le c\,a$ and
$B_j=(\beta_j-b_j/2,\,\beta_j+b_j/2)$ has length $b_j\le b$. Fix
$p\in(0,1]$ for the whole section.

\begin{proposition}\label{prop:reduce}
Define, for $\ell>0$ and $b'>0$, the operator
\[
\Gamma_{\ell,b'}\;:\;L^2(0,\ell)\to L^2(0,\infty),\qquad
(\Gamma_{\ell,b'}f)(s)=\int_0^\ell
\frac{\sin\bigl(\pi b'(s+r)\bigr)}{\pi(s+r)}\,f(r)\,dr .
\]
Then
\[
\|T\|_p^p\;\le\;2\sum_{j=1}^K\sum_{i=1}^M
\bigl\|\Gamma_{\ell_i,\,b_j}\bigr\|_p^p .
\]
\end{proposition}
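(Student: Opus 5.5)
The proof is a chain of exact reductions, each of which either preserves singular values or only decreases them. Rotfel'd subadditivity (Lemma~\ref{lem:svcalc}(iv)) is used to split into pieces.

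\emph{Frequency pieces.} By Lemma~\ref{lem:structure}, $Q_B=\sum_j Q_{B_j}$, so $T=\sum_j P_{A^c}Q_{B_j}P_A$ and hence $\|T\|_p^p\le\sum_j\|P_{A^c}Q_{B_j}P_A\|_p^p$. Write $M_\beta$ for multiplication by $e^{2\pi i\beta x}$. Then $Q_{B_j}=M_{\beta_j}Q_{B_j^\circ}M_{\beta_j}^{-1}$, where $B_j^\circ=(-b_j/2,b_j/2)$. Since $M_\beta$ commutes with spatial projections and is unitary, Lemma~\ref{lem:svcalc}(vi) shows that $P_{A^c}Q_{B_j}P_A$ has the same singular values as $P_{A^c}Q_{B_j^\circ}P_A$.

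\emph{Spatial pieces.} Write $P_A=\sum_iP_{I_i}$ and apply Rotfel'd again. For each $i$ we have $A^c\subset I_i^c$, so $P_{A^c}=P_{A^c}P_{I_i^c}$. The remark after Lemma~\ref{lem:svcalc} then gives
\[
s_n\bigl(P_{A^c}Q_{B_j^\circ}P_{I_i}\bigr)\le s_n\bigl(P_{I_i^c}Q_{B_j^\circ}P_{I_i}\bigr).
\]

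\emph{Reduction to $\Gamma$.} Fix a single interval $I=(u,u+\ell)$ with $\ell=\ell_i$, and set $b'=b_j$. The kernel of $Q_{B^\circ}$ is
\[
g(x-y)=\int_{-b'/2}^{b'/2}e^{2\pi i(x-y)\xi}\,d\xi=\frac{\sin(\pi b'(x-y))}{\pi(x-y)} .
\]
Split $I^c$ into the right half-line $(u+\ell,\infty)$ and the left half-line $(-\infty,u)$. This writes $P_{I^c}Q P_I=R_++R_-$, so Rotfel'd gives $\|P_{I^c}QP_I\|_p^p\le\|R_+\|_p^p+\|R_-\|_p^p$, and this is the source of the factor $2$.

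For $R_+$, take $q=u+\ell$ and substitute $s=x-q\in(0,\infty)$ and $r=q-y\in(0,\ell)$. Then $x-y=s+r$. The maps $f\mapsto f(q+\cdot)$ and $f\mapsto f(q-\cdot)$ are unitary between the relevant $L^2$ spaces, so $R_+$ is unitarily equivalent to $\Gamma_{\ell,b'}$; the kernel is even, so no sign issue arises.

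For $R_-$, take $q=u$ and substitute $s=u-x$ and $r=y-u$. Then $x-y=-(s+r)$, and since $\sin(z)/z$ is even the kernel is again $\sin(\pi b'(s+r))/(\pi(s+r))$.

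Collecting the three steps yields the stated inequality.

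The only delicate points are bookkeeping. First, the unitary equivalences must map $L^2$ of the half-line or interval onto $L^2(0,\infty)$ or $L^2(0,\ell)$ exactly. Second, the modulation step must use the conjugation $Q_{B_j}=M_{\beta_j}Q_{B_j^\circ}M_{\beta_j}^{*}$ with unimodular multipliers on both sides. Third, one should note that dropping $P_{A^c}$ in favour of $P_{I_i^c}$ relies on $P_{I_i^c}$ acting on the output side, where the projection remark applies. None of these steps requires estimates, so I expect no real obstacle beyond verifying the change of variables for the left piece.
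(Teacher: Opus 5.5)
Your proposal is correct and follows the paper's proof essentially step for step: Rotfel'd over frequency components, centring by modulation, Rotfel'd over spatial components with the domination $P_{A^c}=P_{A^c}P_{I_i^c}$, and a left/right split of $I_i^c$ that produces the factor $2$. The only cosmetic difference is that you map each half-line piece to $\Gamma_{\ell,b'}$ by its own boundary-distance substitution (evenness of $g$ is needed only for the left piece), whereas the paper first translates $I_i$ to $(0,\ell_i)$, uses the reflection $x\mapsto\ell-x$ to identify $T_R$ with $T_L$, and then substitutes once.
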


\begin{proof}
We proceed in five exact steps.

\emph{Step 1 (frequency components).} $Q_B=\sum_jQ_{B_j}$
(Lemma~\ref{lem:structure}), so $T=\sum_jT_j$ with
$T_j=P_{A^c}Q_{B_j}P_A$, and Rotfel'd (Lemma~\ref{lem:svcalc}(iv)) gives
$\|T\|_p^p\le\sum_j\|T_j\|_p^p$.

\emph{Step 2 (centring by modulation).} Let
$(M_\beta f)(x)=e^{2\pi i\beta x}f(x)$, a unitary multiplication operator;
it commutes with every spatial projection $P_E$. Since
$\F M_\beta f(\xi)=\widehat f(\xi-\beta)$, one checks
$M_{\beta}^{-1}Q_{B_j}M_{\beta}=Q_{B_j-\beta}$. With $\beta=\beta_j$ and
$B_j^\circ:=B_j-\beta_j=(-b_j/2,b_j/2)$,
\[
T_j=M_{\beta_j}\bigl(P_{A^c}\,Q_{B_j^\circ}\,P_A\bigr)M_{\beta_j}^{-1},
\]
so $s_n(T_j)=s_n(T_j^\circ)$ where $T_j^\circ=P_{A^c}Q_{B_j^\circ}P_A$.
The kernel of $Q_{B_j^\circ}$ is $g_j(x-y)$ with
\[
g_j(u)=\int_{-b_j/2}^{b_j/2}e^{2\pi iu\xi}\,d\xi
=\frac{\sin(\pi b_ju)}{\pi u},
\]
a real, even function.

\emph{Step 3 (spatial components and domination).}
$T_j^\circ=\sum_iP_{A^c}Q_{B_j^\circ}P_{I_i}$, so by Rotfel'd
$\|T_j^\circ\|_p^p\le\sum_i\|P_{A^c}Q_{B_j^\circ}P_{I_i}\|_p^p$. Since
$I_i\subset A$ we have $A^c\subset I_i^c$, hence
$P_{A^c}=P_{A^c}P_{I_i^c}$ and, by Lemma~\ref{lem:svcalc}(i),
\[
s_n\bigl(P_{A^c}Q_{B_j^\circ}P_{I_i}\bigr)
=s_n\bigl(P_{A^c}\,\widetilde T_{ij}\bigr)
\le s_n\bigl(\widetilde T_{ij}\bigr),\qquad
\widetilde T_{ij}:=P_{I_i^c}\,Q_{B_j^\circ}\,P_{I_i}.
\]
Translations $\tau_v f=f(\cdot-v)$ are unitary, commute with the
convolution operator $Q_{B_j^\circ}$, and conjugate $P_E$ to $P_{E+v}$; so
we may translate $I_i$ to $(0,\ell_i)$ without changing singular values.
Write $I=(0,\ell)$, $\ell=\ell_i$, $b'=b_j$, $g=g_j$,
$\widetilde T=P_{I^c}Q_{B^\circ}P_I$.

\emph{Step 4 (left/right split and reflection).}
$I^c=(-\infty,0]\cup[\ell,\infty)$ up to a null set, so
$\widetilde T=T_L+T_R$ with
$T_L=P_{(-\infty,0)}Q_{B^\circ}P_I$, $T_R=P_{(\ell,\infty)}Q_{B^\circ}P_I$,
and $\|\widetilde T\|_p^p\le\|T_L\|_p^p+\|T_R\|_p^p$ by Rotfel'd. Let
$(Rf)(x)=f(\ell-x)$, unitary on $L^2(\R)$. Using that $g$ is even, the
kernel of $RT_RR$ is
\[
\ind{(\ell,\infty)}(\ell-x)\,g\bigl((\ell-x)-(\ell-y)\bigr)\,\ind I(\ell-y)
=\ind{(-\infty,0)}(x)\,g(x-y)\,\ind I(y),
\]
the kernel of $T_L$. Hence $s_n(T_R)=s_n(T_L)$ and
$\|\widetilde T\|_p^p\le2\|T_L\|_p^p$.

\emph{Step 5 (boundary-distance coordinates).} The kernel of $T_L$ is
$\ind{(-\infty,0)}(x)\,g(x-y)\,\ind{(0,\ell)}(y)$. Substituting $s=-x$
(the unitary reflection $f(x)\mapsto f(-x)$ on the output space) and
$r=y$, and using $g(-(s+r))=g(s+r)$, $T_L$ is unitarily equivalent to the
operator $L^2(0,\ell)\to L^2(0,\infty)$ with kernel
\[
g(s+r)=\frac{\sin(\pi b'(s+r))}{\pi(s+r)},
\]
which is $\Gamma_{\ell,b'}$. Collecting Steps 1--5 proves the proposition.
\end{proof}

\section{The two quantitative estimates}\label{sec:keyestimates}

Throughout this section fix $b'>0$ and abbreviate
$\Gamma=\Gamma_{\ell,b'}$.

\subsection{A scale-uniform Hankel--Chebyshev bound}

\begin{lemma}\label{lem:hankel}
For $h>0$ let $K_h:L^2(h,2h)\to L^2(0,\infty)$ be the integral operator
with kernel $\dfrac{1}{2\pi(s+r)}$. Then for every integer $N\ge0$,
\[
s_{N+1}(K_h)\;\le\;0.971\cdot4^{-N}\;\le\;4^{-N},
\]
uniformly in $h>0$.
\end{lemma}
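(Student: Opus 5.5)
The plan is to reduce to a single scale by dilation, write down the Chebyshev expansion of the kernel in the $r$-variable in closed form, and bound the truncation error in Hilbert--Schmidt norm via Lemma~\ref{lem:svcalc}(iii).

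\emph{Scale reduction.} The kernel $1/(2\pi(s+r))$ is homogeneous of degree $-1$. So the $L^2$-normalized dilations $(D_hf)(r)=h^{-1/2}f(r/h)$, applied on the input space $L^2(h,2h)\leftarrow L^2(1,2)$ and on the output space $L^2(0,\infty)$, satisfy $K_h=D_hK_1D_h^{-1}$. Both dilations are unitary, so $s_n(K_h)=s_n(K_1)$ for every $n$. This gives the uniformity in $h$ for free, and it suffices to treat $h=1$.

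\emph{Chebyshev expansion in $r$.} Parametrize $r\in[1,2]$ by $r=(3+u)/2$ with $u\in[-1,1]$, and put $x=2s+3\ge3$. Then
\[
\frac{1}{2\pi(s+r)}=\frac{1}{\pi(x+u)} .
\]
The pole $u=-x$ lies outside the Bernstein ellipse of parameter $\rho(x)=x+\sqrt{x^2-1}\ge3+\sqrt8\approx5.83$. I would use the classical generating-function identity
\[
\frac{1}{x+u}=\frac{2}{\sqrt{x^2-1}}{\sum_{k\ge0}}'(-1)^k\rho(x)^{-k}T_k(u),
\]
which makes the Bernstein-ellipse decay completely explicit. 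Truncating at degree $N-1$ gives a kernel $\sum_{k<N}c_k(s)T_k(u(r))$, which defines an operator $F_N$ of rank at most $N$. Because $|T_k|\le1$ on $[-1,1]$, the remainder is bounded pointwise by
\[
|E_N(s,r)|\le\frac{2}{\pi\sqrt{x^2-1}}\,\frac{\rho(x)^{-N}}{1-\rho(x)^{-1}} .
\]

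\emph{Hilbert--Schmidt bound.} By Lemma~\ref{lem:svcalc}(iii), $s_{N+1}(K_1)\le\|E_N\|_{\HS}$. The $r$-interval has length $1$ and $ds=dx/2$, so
\[
\|E_N\|_{\HS}^2\le\frac{2}{\pi^2}\int_3^\infty\frac{\rho(x)^{-2N}}{(x^2-1)(1-\rho(x)^{-1})^2}\,dx .
\]
The integrand is decreasing in $x$ and the integral converges, because $(x^2-1)^{-1}$ is integrable at infinity. Pulling out $\rho(3)^{-2N}$ bounds the whole expression by a numerical constant times $\rho(3)^{-2N}$, and $\rho(3)^{-1}<1/4$ yields a bound of the form $C\cdot4^{-N}$. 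The case $N=0$ should be checked directly:
\[
\|K_1\|_{\HS}^2=\frac{1}{4\pi^2}\int_1^2\frac{dr}{r}=\frac{\ln2}{4\pi^2},
\]
which gives $s_1(K_1)\le0.14$.

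\emph{Main obstacle.} The only delicate step is the bookkeeping of the numerical constant: certifying that $C\le0.971$ uniformly in $N$. I would first try the crude bound $\rho(x)\ge\rho(3)$ inside the integral together with $\int_3^\infty dx/(x^2-1)=\tfrac12\ln2$. If that leaves too much slack, I would exploit the surplus ratio $(3+\sqrt8)/4>1.45$, applied to $\rho(x)^{-N}$, to absorb the factor $(1-\rho^{-1})^{-2}$ and the $2/\pi^2$.
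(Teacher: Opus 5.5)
Your proposal is correct and follows essentially the paper's route: dilation to $h=1$, Chebyshev truncation in the $r$-variable alone, and the Hilbert--Schmidt bound of Lemma~\ref{lem:svcalc}(iii). The one difference is that you use the exact expansion $\frac{1}{x+u}=\frac{2}{\sqrt{x^2-1}}{\sum_{k\ge0}}'(-1)^k\rho(x)^{-k}T_k(u)$ with an $s$-dependent $\rho(x)\ge3+\sqrt8$, whereas the paper applies the generic Bernstein-ellipse coefficient bound on the fixed ellipse $E_4$, using $\sup_{E_4}|\varphi_s|\lesssim(1+s)^{-1}$. The constant you flag as the main obstacle is not actually one: your crude bound already gives $\|E_N\|_{\HS}\le\frac{\sqrt{\ln 2}}{\pi(\sqrt8-2)}(3+\sqrt8)^{-N}\approx0.32\,(3+\sqrt8)^{-N}$, comfortably below $0.971\cdot4^{-N}$, so no fallback is needed.
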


\begin{proof}
\emph{Scale invariance.} Let $(U_hf)(\sigma)=h^{1/2}f(h\sigma)$. This is
unitary from $L^2(0,\infty)$ onto itself and from $L^2(h,2h)$ onto
$L^2(1,2)$. Then $U_hK_hU_h^{-1}$ has kernel
\[
h\cdot\frac{1}{2\pi(h\sigma+h\tau)}=\frac{1}{2\pi(\sigma+\tau)},
\]
that is, it equals $K_1$. So we may take $h=1$.

\emph{Chebyshev expansion in $r$.} For fixed $s>0$ let
$\varphi_s(r)=\dfrac{1}{2\pi(s+r)}$ on $r\in[1,2]$, and change variables
$r=\tfrac32+\tfrac u2$, $u\in[-1,1]$. As a function of $u$, $\varphi_s$ is
analytic except at the single pole $u_*=-(2s+3)$, with $|u_*|\ge3$. Let
$E_\rho\subset\C$ denote the open Bernstein ellipse with foci $\pm1$ and
semi-axes $\frac12(\rho+\rho^{-1})$, $\frac12(\rho-\rho^{-1})$. Take
$\rho=4$: the semi-major axis is $2.125<3$, so $\varphi_s$ is analytic in
a neighbourhood of $\overline{E_4}$ for every $s>0$. On $E_4$,
$\operatorname{Re}u\ge-2.125$, hence
$\operatorname{Re}r\ge\tfrac32-\tfrac{2.125}2=0.4375$ and
\[
|s+r|\;\ge\;\operatorname{Re}(s+r)\;\ge\;s+0.4375\;\ge\;0.4375\,(s+1),
\]
the last step because $s\ge0.4375\,s$. Thus
$\sup_{E_4}|\varphi_s|\le M(s):=\dfrac{1}{2\pi\cdot0.4375\,(s+1)}$.

By the classical Chebyshev coefficient bound for functions analytic and
bounded by $M$ in $E_\rho$ \cite[Thms.~8.1--8.2]{Trefethen}, the Chebyshev
coefficients of $u\mapsto\varphi_s$, taken in the standard normalization
\[
a_0(s)=\frac1\pi\int_0^\pi\varphi_s(\cos\theta)\,d\theta,
\qquad
a_k(s)=\frac2\pi\int_0^\pi\varphi_s(\cos\theta)\cos(k\theta)\,d\theta
\quad(k\ge1),
\]
satisfy $|a_k(s)|\le2M(s)\rho^{-k}$ ($k\ge1$) and $|a_0(s)|\le M(s)$ ---
the two weights $1/\pi$ and $2/\pi$ being exactly what produces the two
different bounds. Hence the degree-$(N{-}1)$ truncation
$p_{N-1,s}(u)=\sum_{k=0}^{N-1}a_k(s)T_k(u)$, read as the empty sum
$p_{-1,s}\equiv0$ when $N=0$, obeys
\[
\sup_{r\in[1,2]}\bigl|\varphi_s(r)-p_{N-1,s}(r)\bigr|
\le\sum_{k\ge N}|a_k(s)|\le 2M(s)\,\frac{\rho^{-N}}{1-\rho^{-1}}
=\frac{8}{3}M(s)\,4^{-N}
\le\frac{0.9701}{s+1}\cdot4^{-N},
\]
using $\frac{8}{3}\cdot\frac{1}{2\pi\cdot0.4375}=0.97009\ldots$; at $N=0$
the sum contains the term $a_0$, for which the smaller bound $M(s)$ is
available, so the displayed estimate holds there too.

\emph{Rank-$N$ approximation.} Let $F_N$ be the integral operator with
kernel $\sum_{k=0}^{N-1}a_k(s)T_k(u(r))$. Each summand is a separable
kernel, so $\rank F_N\le N$. Each $a_k(\cdot)$ is continuous on
$(0,\infty)$, by the displayed integral formula and dominated convergence
($\varphi_s(\cos\theta)$ is jointly continuous and bounded by $M(s)$,
locally uniformly in $s$); with $|a_k(s)|\le2M(s)\lesssim(1+s)^{-1}$ this
gives $a_k(\cdot)\in L^2(0,\infty)$, so $F_N$ is Hilbert--Schmidt. By
Lemma~\ref{lem:svcalc}(iii),
\[
s_{N+1}(K_1)\le\|K_1-F_N\|_{\HS}
\le0.971\cdot4^{-N}
\Bigl(\int_0^\infty\frac{ds}{(s+1)^2}\int_1^2dr\Bigr)^{1/2}
=0.971\cdot4^{-N}. \qedhere
\]
\end{proof}

\subsection{Far-field pieces: oscillation factorization}

\begin{lemma}\label{lem:farfield}
Let $h>0$ and let $Y\subset[h,2h]$ be measurable. Let
$A_Y:L^2(Y)\to L^2(0,\infty)$ have kernel
$\dfrac{\sin(\pi b'(s+r))}{\pi(s+r)}$. Then
\[
s_n(A_Y)\;\le\;2^{\,3-n}\qquad(n\ge1),
\]
uniformly in $h>0$, $b'>0$ and $Y$. Consequently, since also
$s_n(A_Y)\le1$,
\[
\|A_Y\|_p^p\;\le\;3+\frac{1}{2^{\,p}-1}\;\le\;\frac{C_\sharp}{p},
\qquad C_\sharp:=3+\frac1{\ln2}\le4.5,\qquad 0<p\le1 .
\]
\end{lemma}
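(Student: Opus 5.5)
The plan is to combine the oscillation factorization of \S\ref{sec:strategy} with Lemma~\ref{lem:hankel} and Ky Fan's inequality. For $s\ge0$, $r\in Y$,
\[
\frac{\sin(\pi b'(s+r))}{\pi(s+r)}
=e^{i\pi b's}e^{i\pi b'r}\,\frac{1}{2\pi i(s+r)}
-e^{-i\pi b's}e^{-i\pi b'r}\,\frac{1}{2\pi i(s+r)} .
\]
Accordingly I will write $A_Y=X_+-X_-$, where $X_\pm=U_\pm\,(\mp i)\,K_h P_Y\,V_\pm$. Here $U_\pm$ is multiplication by $e^{\pm i\pi b's}$ on $L^2(0,\infty)$, $V_\pm$ is multiplication by $e^{\pm i\pi b'r}$ on $L^2(Y)$, and $K_hP_Y$ is $K_h$ restricted to $L^2(Y)\subset L^2(h,2h)$. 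By Lemma~\ref{lem:svcalc}(vi) and (i), the latter applied via the inclusion/projection, we get $s_n(X_\pm)=s_n(K_hP_Y)\le s_n(K_h)$. Lemma~\ref{lem:hankel} then gives $s_{N+1}(X_\pm)\le 4^{-N}$ uniformly in $h$, $b'$ and $Y$.

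\emph{Fan step.} Apply Lemma~\ref{lem:svcalc}(ii) with $m=n=N$: $s_{2N+1}(A_Y)\le 2\cdot4^{-N}$. Since singular values are nonincreasing, $s_{2N+2}(A_Y)\le 2\cdot 4^{-N}$ as well. For an index $n=2N+1$ this is $2\cdot 2^{-(n-1)}=2^{2-n}\le2^{3-n}$; for $n=2N+2$ it is $2\cdot2^{-(n-2)}=2^{3-n}$. This proves $s_n(A_Y)\le2^{3-n}$ for all $n\ge1$.

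\emph{Norm bound.} Next, $\|A_Y\|\le 1$. I would get this either from $A_Y$ being a compression of the convolution by $g$, whose Fourier multiplier is $\ind{(-b'/2,b'/2)}$ (after the reflection of Step 5 it is a piece of $P_{(-\infty,0)}Q_{B^\circ}P_I$ with $I\supset Y$), or simply from $s_1\le 2s_1(K_h)\le 2\cdot0.971$, which is too weak. So the projection interpretation is the one to use: $A_Y$ is unitarily equivalent to $P_{(-\infty,0)}Q_{B^\circ}P_{Y}$, a product of projections, hence of norm at most $1$. Thus $s_n\le\min(1,2^{3-n})$, and
\[
\|A_Y\|_p^p\le \sum_{n=1}^3 1+\sum_{n\ge4}2^{(3-n)p}=3+\frac{2^{-p}}{1-2^{-p}}=3+\frac1{2^p-1}.
\]
Finally, $2^p-1\ge p\ln2$ by convexity of $e^{x}$, so the sum is at most $3+1/(p\ln 2)\le(3+1/\ln2)/p$ using $p\le1$.

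I expect no real obstacle here. The points needing care are the bookkeeping of even versus odd indices in the Fan step, and justifying $s_1\le1$ through the projection picture rather than through the Hankel bound.
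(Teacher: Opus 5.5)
Your proposal is correct and follows the paper's proof essentially step for step. You use the same $e^{\pm i\pi b'(s+r)}$ factorization with unimodular multipliers reducing both summands to $K_h$, Fan with $m=n=N$ plus monotonicity for even indices, $s_n\le 1$ via the projection picture $P_{(-\infty,0)}Q_{B^\circ}P_Y$, and the geometric sum with $2^p-1\ge p\ln 2$. There is one harmless sign slip: with $A_Y=X_+-X_-$, the scalar in both $X_\pm$ should be $-i$ (equivalently, keep $\mp i$ and write $A_Y=X_++X_-$), which does not affect any singular value.
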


\begin{proof}
First, $A_Y=A_{[h,2h]}P_Y$, so $s_n(A_Y)\le s_n(A_{[h,2h]})$ by
Lemma~\ref{lem:svcalc}(i); assume $Y=[h,2h]$. From
$\sin\theta=\frac{e^{i\theta}-e^{-i\theta}}{2i}$,
\[
\frac{\sin(\pi b'(s+r))}{\pi(s+r)}
=\Gamma^+(s,r)-\Gamma^-(s,r),\qquad
\Gamma^\pm(s,r)=e^{\pm i\pi b's}\,e^{\pm i\pi b'r}\,
\frac{1}{2\pi i(s+r)} .
\]
The factors $e^{\pm i\pi b's}$ and $e^{\pm i\pi b'r}$ are unimodular
functions of the output and input variables respectively, and $1/i$ has
modulus one, so by Lemma~\ref{lem:svcalc}(vi),
$s_n(\Gamma^\pm)=s_n(K_h)$ with $K_h$ as in Lemma~\ref{lem:hankel}. By Fan
(Lemma~\ref{lem:svcalc}(ii)) with $m=n=N$ and Lemma~\ref{lem:hankel},
\[
s_{2N+1}(A_{[h,2h]})\le2\,s_{N+1}(K_h)\le2\cdot4^{-N}=2^{1-2N}.
\]
For odd $n=2N+1$ this reads $s_n\le2^{1-2N}=2^{2-n}\le 2^{3-n}$; for even
$n=2N+2$, monotonicity gives $s_n\le s_{2N+1}\le2^{1-2N}=2^{3-n}$. This
proves the singular value bound. (The trivial bound $s_n\le1$ holds
because, reversing the substitution of Step~5 of
Proposition~\ref{prop:reduce}, $A_{[h,2h]}$ is unitarily equivalent to
$P_{(-\infty,0)}\,Q_{B^\circ}\,P_{(h,2h)}$ with
$B^\circ=(-b'/2,b'/2)$, a product of orthogonal projections.)

For the quasi-norm: $2^{3-n}\ge1$ for $n\le3$, so
\[
\|A_Y\|_p^p\le\sum_{n=1}^31+\sum_{n\ge4}2^{(3-n)p}
=3+\sum_{m\ge1}2^{-mp}=3+\frac{1}{2^p-1}
\le3+\frac{1}{p\ln2}\le\frac{C_\sharp}{p},
\]
using $2^p-1=e^{p\ln2}-1\ge p\ln2$ and $p\le1$.
\end{proof}

\subsection{The boundary layer: a Taylor-rank bound}

\begin{lemma}\label{lem:bl}
Let $J=(0,D)$ with $D>0$ and $B^\circ=(-b'/2,b'/2)$. Then for every
integer $N\ge\pi e\,b'D$,
\[
s_{N+1}\bigl(Q_{B^\circ}P_J\bigr)\;\le\;2\sqrt{b'D}\;2^{-N}.
\]
Consequently, if $0<p\le1$ and $D\le1+1/p$, then
\[
\bigl\|Q_{B^\circ}P_J\bigr\|_p^p
\;\le\;\pi e\,b'D+1+\frac{6}{p}\,.
\]
\end{lemma}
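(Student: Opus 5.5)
The plan is to pass to the adjoint, $(Q_{B^\circ}P_J)^*=P_JQ_{B^\circ}$, and approximate it by a finite Taylor expansion of the exponential. Since $Q_{B^\circ}=\F^{-1}P_{B^\circ}\F$ is unitarily equivalent to $P_{B^\circ}$, singular values are unchanged if we study $X:=P_{B^\circ}\F P_J$ instead. This is the operator $L^2(0,D)\to L^2(-b'/2,b'/2)$ with kernel $e^{-2\pi i x\xi}$, $x\in(0,D)$, $\xi\in B^\circ$.

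To symmetrize, first multiply by the unimodular factor $e^{-\pi i D\xi}$ in the output variable, which changes nothing by Lemma~\ref{lem:svcalc}(vi). The kernel then becomes $e^{-2\pi i(x-D/2)\xi}$ with $|x-D/2|\le D/2$ and $|\xi|\le b'/2$, so the phase $z=-2\pi i(x-D/2)\xi$ satisfies $|z|\le \pi b'D/2$. Truncate the Taylor series $e^{z}=\sum_{k<N}z^k/k!+R_N$. Each term $(x-D/2)^k\xi^k$ is separable, so the truncation has rank at most $N$. The remainder obeys $|R_N|\le \frac{|z|^N}{N!}e^{|z|}$; more carefully, $\sum_{k\ge N}|z|^k/k!\le \frac{|z|^N}{N!}\cdot\frac1{1-|z|/(N+1)}$.

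Next use $N!\ge (N/e)^N$. With $|z|\le \pi b'D/2$ and $N\ge\pi e b'D$ we get $e|z|/N\le 1/2$, so $|R_N|\le 2\cdot 2^{-N}$. The Hilbert--Schmidt norm over the rectangle $(0,D)\times B^\circ$, which has area $b'D$, then gives $\|X-F\|_{\HS}\le 2\sqrt{b'D}\,2^{-N}$. Lemma~\ref{lem:svcalc}(iii) yields the first claim. The constant $\pi e$ leaves room to spare: centring halved the phase, which is what makes the ratio $1/2$ rather than $1$.

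For the quasi-norm, let $N_0=\lceil \pi e b'D\rceil\le \pi e b'D+1$ and bound the first $N_0$ singular values by $1$, since $X$ is a product of projections and unitaries. For the tail, $s_{N+1}\le 2\sqrt{b'D}\,2^{-N}$ with $N\ge N_0$, so
\[
\sum_{N\ge N_0}\bigl(2\sqrt{b'D}\,2^{-N}\bigr)^p\le (2\sqrt{b'D})^p\,2^{-N_0p}\,\frac{1}{1-2^{-p}}.
\]
Here $\frac{1}{1-2^{-p}}\le \frac{1}{p\ln 2}\cdot 2^{p}\le 2/(p\ln2)$.

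The remaining task, which I expect to be the fiddly step, is to absorb $(4b'D)^{p/2}2^{-N_0p}$ into an absolute constant. Set $x=b'D$ and use $2^{N_0}\ge 2^{\pi e x}$. Then $(4x)^{1/2}2^{-\pi e x}$ is bounded by an absolute constant (its maximum is below $1$), so the $p$-th power is at most $1$. The tail is therefore $\le 2/(p\ln 2)\le 3/p$, comfortably within $6/p$.

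The hypothesis $D\le 1+1/p$ seems not needed in this route. It may instead be used to handle small $b'D$, where $N_0$ could be $0$ or $1$ and the bound $\sqrt{b'D}$ must be controlled without the exponential. If my absorption fails there, I would fall back on $s_n\le1$ for a few more terms, paying the extra additive constants counted in $1+6/p$.
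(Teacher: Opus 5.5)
Your proposal is correct and is essentially the paper's proof. You pass to $P_{B^\circ}\F P_J$, centre $x$ about $D/2$, truncate the Taylor series using $e|z|/N\le\frac12$, bound the remainder in Hilbert--Schmidt norm, and sum the tail from $N_0=\lceil\pi e\,b'D\rceil$.

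Your direct estimate $2\sqrt{x}\,2^{-\pi e x}<1$ holds for every $x>0$, so the small-$b'D$ fallback in your last paragraph is not needed. It even improves the paper's tail bound $5.78/p$ to $3/p$, and, as the paper also remarks, the hypothesis $D\le1+1/p$ is never used.

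There is one trivial sign slip. To turn $e^{-2\pi i x\xi}$ into $e^{-2\pi i(x-D/2)\xi}$ you multiply by $e^{+\pi i D\xi}$, or equivalently factor out $e^{-\pi i D\xi}$ as the paper does. This is harmless because either factor is unimodular.
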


\begin{proof}
Since $\F$ is unitary, $s_n(Q_{B^\circ}P_J)=s_n(P_{B^\circ}\F P_J)$. The
latter operator maps $L^2(J)\to L^2(B^\circ)$ with kernel
$e^{-2\pi i\xi x}$, $\xi\in B^\circ$, $x\in J$. Centre $x=\frac D2+x'$,
$x'\in(-\frac D2,\frac D2)$:
\[
e^{-2\pi i\xi x}=e^{-\pi i\xi D}\cdot e^{-2\pi i\xi x'} .
\]
The first factor is a unimodular function of the output variable $\xi$, so
by Lemma~\ref{lem:svcalc}(vi) it may be dropped. For the second, with
$z=-2\pi i\xi x'$ we have $|z|\le2\pi\cdot\frac{b'}2\cdot\frac D2
=\frac{\pi b'D}{2}=:\zeta$, and the Taylor truncation
\[
e^{z}=\sum_{n=0}^{N-1}\frac{z^n}{n!}+R_N(z),\qquad
|R_N(z)|\le\sum_{n\ge N}\frac{\zeta^n}{n!}
\le\frac{\zeta^N}{N!}\sum_{k\ge0}\Bigl(\frac\zeta N\Bigr)^{k}
\le2\,\Bigl(\frac{e\zeta}{N}\Bigr)^{N}\le2\cdot2^{-N}
\]
for $N\ge2e\zeta=\pi e\,b'D$ (using $N!\ge(N/e)^N$ and
$e\zeta/N\le\frac12$, which also gives $\zeta/N\le\frac12$). Each Taylor
term $z^n/n!=(-2\pi i)^n\xi^nx'^n/n!$ is a separable kernel, so the
truncated kernel defines an operator $F_N$ of rank $\le N$, plainly
Hilbert--Schmidt on the bounded rectangle. Hence
\[
s_{N+1}(Q_{B^\circ}P_J)\le\|R_N\|_{\HS}
\le2\cdot2^{-N}\,\bigl(|B^\circ|\,|J|\bigr)^{1/2}=2\sqrt{b'D}\,2^{-N}.
\]
For the quasi-norm, set $N_0=\lceil\pi e\,b'D\rceil$ and use
$s_n\le\|Q_{B^\circ}P_J\|\le1$ for $n\le N_0$ and the displayed bound for
$n=N_0+m+1$, $m\ge0$ (valid since $N_0+m\ge\pi eb'D$):
\[
\|Q_{B^\circ}P_J\|_p^p
\le N_0+\sum_{m\ge0}\bigl(2\sqrt{b'D}\bigr)^p2^{-p(N_0+m)}
=N_0+\bigl(2\sqrt{b'D}\bigr)^p\,\frac{2^{-pN_0}}{1-2^{-p}} .
\]
Now $(2\sqrt{b'D})^p\,2^{-pN_0}
=\exp\bigl[p\bigl(\ln2+\tfrac12\ln(b'D)-N_0\ln2\bigr)\bigr]$, and with
$x=b'D>0$,
\[
\tfrac12\ln x-N_0\ln2\le\tfrac12\ln x-\pi e\ln2\cdot x\le
\max_{x>0}\bigl(\tfrac12\ln x-5.91x\bigr)<0
\]
(the maximum, at $x=1/(2\cdot5.91)=0.0846$, equals
$\tfrac12\ln(0.0846)-\tfrac12\approx-1.735<0$). Hence
$(2\sqrt{b'D})^p2^{-pN_0}\le e^{p\ln2}\le2$. Together with
$\frac{1}{1-2^{-p}}\le\frac{2}{p\ln2}\le\frac{2.89}{p}$, which holds
because $1-e^{-p\ln2}\ge p\ln2\,e^{-p\ln2}\ge\frac{p\ln2}{2}$ for
$p\le1$, this gives
\[
\|Q_{B^\circ}P_J\|_p^p\le N_0+\frac{5.78}{p}
\le\pi e\,b'D+1+\frac{6}{p}. \qedhere
\]
\end{proof}

\begin{remark}
The hypothesis $D\le1+1/p$ was not used in the proof and may be dropped;
we keep it in the statement only because it is the regime in which the
lemma is applied and the bound is read as $O((1+b')/p)$.
\end{remark}

\section{Assembly and proof of the main theorem}\label{sec:assembly}

\begin{proposition}\label{prop:gamma}
For all $\ell,b'>0$ and $0<p\le1$, with $D:=1/p$,
\[
\|\Gamma_{\ell,b'}\|_p^p\;\le\;
\frac{1}{p}\Bigl[\pi e\,b'+8
+C_\sharp\bigl(1+\logp(\ell p)\bigr)\Bigr],
\qquad C_\sharp\le4.5 .
\]
\end{proposition}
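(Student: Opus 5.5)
Set $D=1/p$ and split the input interval $(0,\ell)$ into a boundary layer and dyadic far-field scales. If $\ell\le D$, take only the layer $(0,\ell)\subset(0,D)$. Otherwise let $S=\lceil\logp(\ell/D)\rceil=\lceil\logp(\ell p)\rceil$ and write
\[
\Gamma=\Gamma P_{(0,D)}+\sum_{k=0}^{S-1}\Gamma P_{(2^kD,\,2^{k+1}D]\cap(0,\ell)} .
\]
The pieces have orthogonal input projections. Rotfel'd subadditivity (Lemma~\ref{lem:svcalc}(iv)) then gives
\[
\|\Gamma\|_p^p\le\|\Gamma P_{(0,D)\cap(0,\ell)}\|_p^p+\sum_{k<S}\|A_k\|_p^p .
\]
Each far-field piece $A_k$ is an operator $A_Y$ with $Y\subset[h,2h]$ and $h=2^kD$. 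Lemma~\ref{lem:farfield} gives $\|A_k\|_p^p\le C_\sharp/p$ uniformly in $k$, $b'$ and $\ell$. The far field therefore contributes at most $C_\sharp S/p\le C_\sharp(1+\logp(\ell p))/p$, since $S\le 1+\logp(\ell p)$.

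For the boundary layer, undo the substitution of Step~5 of Proposition~\ref{prop:reduce}. This shows that $\Gamma P_{(0,D')}$, with $D'=\min(D,\ell)$, is unitarily equivalent to $P_{(-\infty,0)}Q_{B^\circ}P_{(0,D')}$. Dropping the outer projection does not increase singular values, so $s_n(\Gamma P_{(0,D')})\le s_n(Q_{B^\circ}P_{(0,D')})\le s_n(Q_{B^\circ}P_{(0,D)})$. Lemma~\ref{lem:bl} with $D=1/p$, whose hypothesis $D\le 1+1/p$ holds, then yields
\[
\|\Gamma P_{(0,D')}\|_p^p\le \pi e\,b'/p+1+6/p\le(\pi e\,b'+7)/p ,
\]
using $1\le 1/p$.

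Adding the two contributions gives $\frac1p[\pi e\,b'+7+C_\sharp(1+\logp(\ell p))]$. This is already within the stated bound, since $7\le8$; the spare unit absorbs any rounding in the count of dyadic scales. No step is a genuine obstacle. The only care needed is bookkeeping: the number of dyadic intervals needed to cover $(D,\ell)$ must be at most $1+\logp(\ell p)$, including the degenerate case $\ell\le D$ where no far-field term appears, and the boundary-layer identification with $Q_{B^\circ}P_J$ must be made rigorous. The latter is what lets the estimate on a half-line output space be bounded by the Fourier-side Taylor-rank bound.
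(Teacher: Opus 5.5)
Your proposal is correct and follows the paper's proof essentially step for step. It uses the same boundary layer of width $D=1/p$, bounded through the Step~5 unitary equivalence and Lemma~\ref{lem:bl}; the same $S=\lceil\log_2(\ell p)\rceil\le 1+\logp(\ell p)$ dyadic far-field pieces, each bounded by Lemma~\ref{lem:farfield}; and Rotfel'd subadditivity, which needs no orthogonality, to sum them to $\frac1p[\pi e\,b'+7+C_\sharp(1+\logp(\ell p))]$.
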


\begin{proof}
Abbreviate $\Gamma=\Gamma_{\ell,b'}$ and recall that, by
Section~\ref{sec:reductions} Step 5 read backwards, $\Gamma$ is unitarily
equivalent to $T_L=P_{(-\infty,0)}Q_{B^\circ}P_{(0,\ell)}$, and that for
any measurable $Y\subset(0,\ell)$ the restriction $\Gamma P_Y$ is
correspondingly unitarily equivalent to
$P_{(-\infty,0)}Q_{B^\circ}P_{Y}$, whence
\begin{equation}\label{eq:drop}
s_n(\Gamma P_Y)\;\le\;s_n\bigl(Q_{B^\circ}P_Y\bigr)\qquad(n\ge1)
\end{equation}
by Lemma~\ref{lem:svcalc}(i) (dropping the contraction
$P_{(-\infty,0)}$).

\emph{Case $\ell\le D$.} Then
$\Gamma=\Gamma P_{(0,\ell)}$ and, by \eqref{eq:drop} with $Y=(0,\ell)$,
$s_n(\Gamma)\le s_n(Q_{B^\circ}P_{(0,\ell)})\le
s_n(Q_{B^\circ}P_{(0,D)})$, the last step because
$P_{(0,\ell)}=P_{(0,D)}P_{(0,\ell)}$ and Lemma~\ref{lem:svcalc}(i) again.
Lemma~\ref{lem:bl} with $D=1/p\le1+1/p$ gives
$\|\Gamma\|_p^p\le\pi eb'D+1+6/p\le(\pi eb'+7)/p$, which is within the
claimed bound (the far-field term is absent).

\emph{Case $\ell>D$.} Decompose $(0,\ell)=(0,D]\cup\bigcup_{k=0}^{S-1}Y_k$
with
\[
Y_k=(2^kD,\,2^{k+1}D]\cap(0,\ell),\qquad
S=\bigl\lceil\log_2(\ell/D)\bigr\rceil\ \ (\ge1),
\]
so that $2^SD\ge\ell$ and the union covers $(0,\ell)$. Correspondingly
$\Gamma=\Gamma P_{(0,D]}+\sum_{k=0}^{S-1}\Gamma P_{Y_k}$ and Rotfel'd
gives
\[
\|\Gamma\|_p^p\le\|\Gamma P_{(0,D]}\|_p^p
+\sum_{k=0}^{S-1}\|\Gamma P_{Y_k}\|_p^p .
\]
The boundary layer is handled exactly as in the previous case:
$\|\Gamma P_{(0,D]}\|_p^p\le(\pi eb'+7)/p$. Each far-field piece
$\Gamma P_{Y_k}$ is the operator $A_{Y_k}$ of Lemma~\ref{lem:farfield}
with $h=2^kD$ and $Y_k\subset[h,2h]$, so
$\|\Gamma P_{Y_k}\|_p^p\le C_\sharp/p$. Since
$S\le1+\logp(\ell/D)=1+\logp(\ell p)$,
\[
\|\Gamma\|_p^p\le\frac{\pi eb'+7}{p}+\frac{C_\sharp}{p}
\bigl(1+\logp(\ell p)\bigr),
\]
which implies the claim (with $8$ absorbing the $7$ and rounding).
\end{proof}

\begin{proof}[Proof of Theorem~\ref{thm:main}]
Fix $0<\eps<\frac12$ and set
\[
t=\sqrt{\eps(1-\eps)},\qquad
\widetilde L=\ln\frac{1}{\eps(1-\eps)},\qquad
p=\frac1{\widetilde L} .
\]
Since $\eps(1-\eps)<\frac14<e^{-1}$, we have $\widetilde L>1$ and
$p\in(0,1)$. Also $t^{-p}=\exp\bigl(\frac p2\ln\frac1{\eps(1-\eps)}\bigr)
=e^{1/2}$. By Lemmas~\ref{lem:plunge} and \ref{lem:svcalc}(v),
Proposition~\ref{prop:reduce}, and Proposition~\ref{prop:gamma} (applied
with $\ell=\ell_i\le ca$, $b'=b_j\le b$, noting that the bound of
Proposition~\ref{prop:gamma} is nondecreasing in $\ell$ and $b'$),
\begin{align*}
\Lambda_\eps
&\le n(t;T)\le t^{-p}\|T\|_p^p
=e^{1/2}\,\|T\|_p^p
\le 2e^{1/2}\sum_{j=1}^K\sum_{i=1}^M\|\Gamma_{\ell_i,b_j}\|_p^p\\
&\le 2e^{1/2}\,MK\;\widetilde L\;
\Bigl[\pi e\,b+8+4.5\bigl(1+\logp(ca/\widetilde L)\bigr)\Bigr] .
\end{align*}
Finally $\logp x=\lnp x/\ln2$, so
$4.5\,\logp x=(4.5/\ln2)\,\lnp x\le6.5\,\lnp x$, and
\[
2e^{1/2}\bigl[\pi e\,b+12.5+6.5\,\lnp(ca/\widetilde L)\bigr]
\le 2e^{1/2}\cdot12.5\,(1+b)\bigl(1+\lnp(ca/\widetilde L)\bigr)
+2e^{1/2}\cdot 6.5\bigl(1+\lnp(ca/\widetilde L)\bigr),
\]
so $\Lambda_\eps\le63\,MK(1+b)\,\widetilde L\,
\bigl(1+\lnp(ca/\widetilde L)\bigr)$, as claimed.
\end{proof}

\begin{proof}[Proof of Corollary~\ref{cor:kdl}]
Let $c\ge2$, $\alpha\ge4$, $\alpha^{-c}<\eps<\frac12$, and
$L=\ln(1/\eps)\in(\ln2,\,c\ln\alpha)$. We must dominate the bound of
Theorem~\ref{thm:main} by $L\ln(\alpha c/L)$.

First, $L\le\widetilde L\le L+\ln2\le2L$ since $L>\ln2$. Second, the
function $\alpha\mapsto\alpha/\ln\alpha$ is increasing for $\alpha\ge e$,
so from $L<c\ln\alpha$,
\[
\ln\frac{\alpha c}{L}\;>\;\ln\frac{\alpha}{\ln\alpha}
\;\ge\;\ln\frac{4}{\ln4}\;=\;1.059\ldots\;\ge\;1 .
\]
Third, since $\widetilde L\ge L$,
\[
\lnp\frac{ca}{\widetilde L}\le\lnp\frac{ca}{L}
=\lnp\Bigl(\frac{\alpha c}{L}\cdot\frac a\alpha\Bigr)
\le\ln\frac{\alpha c}{L}+\lnp\frac{a}{4}
\le\bigl(1+\lnp a\bigr)\,\ln\frac{\alpha c}{L},
\]
where the last step uses $\ln(\alpha c/L)\ge1$ and $\lnp(a/4)\le\lnp a$.
Combining,
\[
\widetilde L\Bigl(1+\lnp\frac{ca}{\widetilde L}\Bigr)
\le 2L\,\bigl(1+(1+\lnp a)\bigr)\ln\frac{\alpha c}{L}
\le 2(2+\lnp a)\,L\,\ln\frac{\alpha c}{L}.
\]
Theorem~\ref{thm:main} then gives
\begin{equation}\label{eq:cor-natural}
\Lambda_\eps\;\le\;2C_0MK(1+b)(2+\lnp a)\,L\,\ln\frac{\alpha c}{L},
\end{equation}
which is the assertion when $\log$ is read as $\ln$, since $2C_0\le4C_0$.

For $\log=\log_2$ a conversion is required, and the stated constant absorbs
it. Write $u=\alpha c/L$, so $\ln u\ge1.059$ by the second step above. The
base-$2$ right-hand side is
\[
\log_2\frac1\eps\;\log_2\frac{\alpha c}{\log_2(1/\eps)}
=\frac{L}{\ln2}\cdot\frac{\ln(u\ln2)}{\ln2}
=\frac{L}{\ln^22}\bigl(\ln u+\ln\ln2\bigr),
\]
so \eqref{eq:cor-natural} is dominated by
$4C_0MK(1+b)(2+\lnp a)\log_2\frac1\eps\log_2\bigl(\alpha c/\log_2(1/\eps)\bigr)$
as soon as
\[
2\ln u\;\le\;\frac{4}{\ln^22}\bigl(\ln u+\ln\ln2\bigr),
\qquad\text{i.e.}\qquad
\ln u\;\ge\;\frac{-4\ln\ln2}{4-2\ln^22}\;=\;0.4824\ldots,
\]
which holds since $\ln u\ge1.059$. (There is room to spare: the same
comparison with $2C_0$ in place of $4C_0$ would require only
$\ln u\ge0.705$, so the stated constant is not forced by the conversion.)
The statement is therefore proved for the two readings named; it is not
base-free. For $\log=\log_{10}$ one has $4/\ln^210=0.754<2$ and the
comparison fails once $u$ is large.
\end{proof}

\begin{remark}[Sharpness of the regime, and what the theorem adds]
\label{rem:sharp}
Theorem~\ref{thm:main} covers a wider range of parameters than the
conjecture as stated: it holds for all $c>0$ and all $\eps\in(0,\frac12)$,
with no relation imposed between them. Once $\widetilde L\ge ca$ the
logarithmic factor degenerates and the bound reads
$\Lambda_\eps\le C\widetilde L$. This does \emph{not} follow from
$\eps\le\alpha^{-c}$ by itself: that hypothesis gives only
$\widetilde L\ge L\ge c\ln\alpha$, so the degeneracy additionally needs
$a\le\ln\alpha$, and for $a>\ln\alpha$ the logarithmic factor survives.
With $\alpha=4$, $c=10$, $a=5$ and $\eps=\alpha^{-c}$, for instance,
$\widetilde L=13.86$ against $ca=50$.

The constant depends on $A_0,B_0$ only through $(M,K,a,b)$; notably it is
independent of the gap lengths and of the diameters of $A_0,B_0$, because
each component of $A$ is compared against the \emph{full half-lines} on
either side of it (Step 3 of Proposition~\ref{prop:reduce} only enlarges
the complement).
\end{remark}

\begin{remark}[Why no almost-orthogonality is needed]\label{rem:cotlar}
In the operator norm, or in the Hilbert--Schmidt norm, orthogonality
between summands is worth something, and Cotlar--Stein-type lemmas convert
approximate orthogonality into a gain. In a Schatten $p$-quasi-norm with
$0<p\le1$ the generic gain is nil, for the following reason. Suppose
$X_1,\dots,X_n$ are compact with pairwise orthogonal ranges and pairwise
orthogonal initial spaces --- exact orthogonality, not merely approximate.
Ranges orthogonal gives $X_j^*X_k=0$ for $j\ne k$, so
$\bigl(\sum_kX_k\bigr)^*\bigl(\sum_kX_k\bigr)=\sum_kX_k^*X_k$, and the
initial spaces being orthogonal makes this sum block diagonal. The singular
value sequence of $\sum_kX_k$ is therefore the decreasing rearrangement of
the union of the sequences $\bigl(s_n(X_k)\bigr)_n$, whence
\[
\Bigl\|\sum_kX_k\Bigr\|_p^p=\sum_k\|X_k\|_p^p
\qquad\text{for every }p>0 .
\]
Rotfel'd subadditivity is thus already an equality in the most favourable
case, so no hypothesis of orthogonality or near-orthogonality, taken by
itself, improves the exponent-$p$ bound. This does not exclude estimates
that exploit further structure particular to the summands at hand; the
point is only that orthogonality alone yields nothing here, so nothing is
lost by forgoing it.

The decomposition used here is accordingly engineered so that
subadditivity alone suffices: the dyadic splitting is performed in the
\emph{single} variable $r$ (boundary distance on the inside), while the
outside variable $s$ ranges over the entire half-line in every piece.
Because of the oscillation factorization, every far-field piece has
singular values bounded by the \emph{same} scale-free sequence $2^{3-n}$
(Lemma~\ref{lem:farfield}); each piece costs at most $C_\sharp/p$, and
there are only $1+\logp(\ell p)$ pieces. A double dyadic decomposition in
$(s,r)$ would instead produce $\sim\log^2$ pieces, and by the identity
above their mutual orthogonality, however good, would not by itself
recover the loss.
\end{remark}

\begin{remark}[Where the factor $\log(\alpha c/L)$ comes from]
The boundary-layer width is chosen as $D=1/p=\widetilde L$. Scales finer
than $\widetilde L$ are not resolved dyadically at all: they are absorbed
into one block whose $\eps$-rank is $O((1+b)\widetilde L)$ by the
Taylor-rank bound (Lemma~\ref{lem:bl}). Only the
$\log_2(\ell/\widetilde L)$ scales \emph{coarser} than $\widetilde L$ are
paid for at $C_\sharp/p$ each. This trade --- counting depth $\widetilde L$
against geometric scales below width $\widetilde L$ --- is what improves
$L\log(\alpha c)$ to $L\log(\alpha c/L)$ in the present argument. We have not
compared this mechanism with the one that produces the same improvement in
\cite{KDL}.
\end{remark}

\section{Which steps are specific to dimension one}\label{sec:d1}

We list the $d=1$-specific ingredients precisely, since the value
of an independent proof lies in exposing what might generalize.

\begin{enumerate}[label=(\alph*),itemsep=2pt]
\item \emph{Geometry of the boundary.} In $d=1$, finite upper Minkowski
boundary content forces $\partial A_0$ to be finite
(Remark~\ref{rem:mink}), so $A$ is a finite union of intervals and, after the
enlargement in Step~3 of Proposition~\ref{prop:reduce}, each $I_i^c$
decomposes into two half-lines attached to boundary \emph{points}. In $d\ge2$
the boundary is a hypersurface; the analogue of
Steps 3--5 of Proposition~\ref{prop:reduce} would presumably be a tubular
decomposition along $\partial(cA_0)$, contributing a factor $c^{d-1}$ ---
the exponent suggested by the two-term asymptotics available for such
operators under regularity hypotheses on the boundary \cite{Sobolev}. We do
not carry this out, and nothing below depends on it.
\item \emph{Oscillation factorization.} The identity
$e^{\pm i\pi b(s+r)}=e^{\pm i\pi bs}e^{\pm i\pi br}$ uses that, on
opposite sides of a boundary point, $x-y=\mp(s+r)$ is an exact function
of the two scalar boundary distances. In $d\ge2$ the phase
$e^{2\pi ix\cdot\xi}$ depends on the tangential offset along the boundary
as well, and does not split as
(unimodular in $x$)$\times$(unimodular in $y$) after restricting to a
tube pair. This is the step we do not know how to replace. We suspect that
this failure is related to the extra logarithmic factor in the general-set
bound of \cite{KDL}, but we have established no such connection, and the
remark is offered as a suspicion rather than as a result.
\item \emph{Hankel structure.} That the non-oscillatory amplitude is the
one-variable Hankel kernel $1/(s+r)$ --- whose Chebyshev rank is
scale-free (Lemma~\ref{lem:hankel}) --- again reflects dependence on
$s+r$ alone.
\item \emph{What is dimension-free.} The plunge-to-$T^*T$ reduction
(Lemma~\ref{lem:plunge}), Rotfel'd assembly across components and across
one-variable scales (Remark~\ref{rem:cotlar}), and the exponent
$p=1/\widetilde L$ with the width choice $D=\widetilde L$ use nothing about
the dimension. The Taylor-rank boundary-layer bound (Lemma~\ref{lem:bl}) is
not $d=1$-specific in mechanism either --- Taylor truncation of the phase,
separable terms, Hilbert--Schmidt remainder --- but its transposition is
constrained by the very finiteness that makes it work here. The remainder is
controlled in Hilbert--Schmidt norm on the rectangle $B^\circ\times J$, so
both factors must have finite measure. An unbounded slab
$J=\R^{d-1}\times(0,D)$ does not qualify, and the failure is not merely one
of bookkeeping: for a product frequency box $B^\circ=B_1\times B_2$ one has
$Q_{B^\circ}P_J=Q_{B_1}\otimes\bigl(Q_{B_2}P_{(0,D)}\bigr)$, a tensor
product of an infinite-rank projection with a nonzero operator, which is not
compact, so it has no singular values to count. Taylor truncation in $d$
variables would in any case produce rank $O(N^d)$ rather than $O(N)$. What
the argument transposes to is a \emph{bounded} boundary patch, not a slab.
We therefore state no $d\ge2$ rank count and make no per-unit-area
assertion: extracting one, and determining how such blocks would have to be
summed along a boundary layer, belongs to the higher-dimensional problem,
and nothing proved above settles it. Of the three $d=1$-specific
ingredients, (b) is the obstruction we cannot presently circumvent; whether
(a) and (c) admit substitutes in $d\ge2$ we leave open.
\end{enumerate}

\section*{Acknowledgments}

\noindent\textbf{AI Use Disclosure.} Generative AI tools were used to assist with
language editing and copyediting of this manuscript. Selected equations and
derivations were additionally checked using an internal model-verification
pipeline developed by Harmonic Research Technologies (HRT), in conjunction with
the authors' own derivations and standard checks. The authors take full
responsibility for all mathematical content and results presented in this paper.

\bigskip
\noindent
Ahmadreza Azimifard\\
Harmonic Research \& Technologies, LLC.\\
New York, NY, USA\\
Email: \href{mailto:afard@harmonicrt.com}{\texttt{afard@harmonicrt.com}}

\end{document}